\def\N{{\mathbb N}}
\def\Z{{\mathbb Z}}
\def\R{{\mathbb R}}
\def\Hb{{\mathbb H}}
\def\Ec{{\mathcal E}}
\def\Hc{{\mathcal H}}
\def\Wc{{\mathcal W}}
\def\ebf{\mathbf{e}}
\def\Flow{F_\textup{sub}}
\def\mulow{\mu_\textup{sub}}
\def\Fhi{F_\textup{dom}}
\def\Yhi{Y_\textup{dom}}
\def\Cyl{\mathcal{C}}
\def\tbar{\bar{t}}
\def\be{\begin{equation}}
\def\ee{\end{equation}}
\def\bea{\begin{equation*}}
\def\eea{\end{equation*}}
\def\eps{\varepsilon}
\def\Pbf{{\bf P}}
\def\Pr{{\mathbb P}}
\DeclareMathOperator{\E}{{\mathbb E}}
\DeclareMathOperator{\supp}{supp}
\newtheorem{thm}{Theorem}[section]
\newtheorem{lma}[thm]{Lemma}
\newtheorem{prop}[thm]{Proposition}
\newtheorem{claim}{Claim}
\newtheorem*{claim*}{Claim}
\theoremstyle{remark}
\newtheorem{remark}[thm]{Remark}
\newtheorem{preex}[thm]{Example}
\theoremstyle{definition}
\numberwithin{equation}{section}
\begin{document}

\title{Inhomogeneous first-passage percolation}
\date{\today}
\author{Daniel Ahlberg \\ \small IMPA \and Michael Damron \\ \small Indiana University, Bloomington\\ \small Princeton University \and Vladas Sidoravicius \\ \small IMPA}
\maketitle

\begin{abstract}
We study first-passage percolation where edges in the left and right half-planes are assigned values according to different distributions. We show that the asymptotic growth of the resulting inhomogeneous first-passage process obeys a shape theorem, and we express the limiting shape in terms of the limiting shapes for the homogeneous processes for the two weight distributions. We further show that there exist pairs of distributions for which the rate of growth in the vertical direction is strictly larger than the rate of growth of the homogeneous process with either of the two distributions, and that this corresponds to the creation of a defect along the vertical axis in the form of a `pyramid'.
\end{abstract}

\section{Introduction}

First-passage percolation is a stochastic model for spatial growth that has been widely studied by mathematicians and physicists (see e.g.~\cite{kruspo91,kesten03,howard04}). Since the pioneering work of Eden~\cite{eden58}, both communities have benefited from intense activity connected to first-passage percolation, resulting in a rigorous theory for subadditive ergodic processes~\cite{kingman73}, and far reaching predictions of KPZ-theory~\cite{karparzha86}. Typically one assigns nonnegative i.i.d.\ weights to the edges of the usual integer lattice in two or more dimensions, and studies the pseudo-metric $T$ induced by the resulting weighted graph. In this paper we introduce an inhomogeneous version in which edges in the left and right half-planes of the $\Z^2$ lattice are assigned weights according to distributions $F_-$ and $F_+$. The first fundamental question is then whether the Shape Theorem still holds; that is, does the rescaled ball $B_t=\{x:T(0,x)\le t\}$ obey a law of large numbers as in the usual 
model, and if so, what does the limiting shape for $B_t/t$ 
look like?

Unlike in the homogeneous case, when $F_-=F_+$, we cannot rely on the usual ergodic theory for subadditive processes due to the lack of horizontal translation invariance. To establish existence of radial limits, a precursor to the Shape Theorem, we instead complement the classical approach using large deviation estimates for half-plane passage times introduced in~\cite{A13-2}.
Our method also shows how the asymptotic shape of $B_t/t$ can be described in terms of the shapes for homogeneous first-passage percolation with $F_-$ or $F_+$. If either of $F_-$ and $F_+$ dominates the other (in a concave stochastic ordering), then the asymptotic shape is the convex hull of the restriction to respective half-planes of the asymptotic shapes for $F_-$ and $F_+$. When no such relation is present, the asymptotic shape equals the convex hull of the two half-shapes and a potentially wider
additional line segment along the vertical axis
(see Figure~\ref{fig: shape_thm}, page~\pageref{fig: shape_thm}).


The behavior of first-passage percolation along the boundary of two regions with different passage times has attracted much attention due to its physical relevance and mathematical challenge. In~\cite{vdbkes93} it was shown that if passage times with distribution $F$ are replaced throughout the graph by others, distributed according to $F'$, which is strictly smaller than $F$ in a stochastic sense, then the time constant changes strictly. But how would the passage time change, for example, in the first coordinate direction $\mathbf{e}_1$ if passage times are modified only along edges lying on the $\mathbf{e}_1$-axis?  Will an arbitrarily small modification be detectable on a macroscopic scale? This is the well-known `columnar defect' problem, which has been studied in many forms both numerically and rigorously; see for instance~\cite{woltan90,kanmuk92,mylmaumertimhorhanij03}. There is no satisfactory theory which would explain why some models are `sensitive'  to any arbitrarily small perturbation and others 
are not; this is determined by competition between localized reinforcement, induced by an impurity, and bulk fluctuations, which in many cases are difficult to analyze. Perhaps the most prominent example is the one-dimensional totally asymmetric simple exclusion process with a `slow bond' at the origin and particle density $\rho = 1/2$~\cite{janleb92,janleb94}, which for some initial conditions can be represented either in terms of last-passage percolation with a columnar defect or the so-called Poly-Nuclear Growth model ({PNG}), for which it is believed that any perturbation will be reflected at the macroscopic level in the change of the current. In line with the terminology of this article, this would mean the creation of a defect in form of a `pyramid'. A theory of propagation of influence of impurities was proposed in~\cite{befsidspovar06} and later used to show that for a randomized PNG model with a columnar defect, only modifications above a certain threshold result in a `spike' on the macroscopic 
profile~\cite{befsidvar10}.


The inhomogeneous model introduced in this paper is rich enough to display defects at a macroscopic scale: We show in Theorem~\ref{thm: strict_inequality} that there are pairs of distributions $(F_-,F_+)$ for which the speed of growth along the vertical axis is faster than it would be for homogeneous first-passage percolation with either of $F_-$ and $F_+$. Due to subadditivity in the model, the enhanced speed in the vertical direction does not create a `spike' on the limiting shape (as in randomized polynuclear growth), but instead a `pyramid' (see\ Theorem~\ref{thm: mu_bar_properties}). As in pinning phenomena for polymers, the formation of a pyramid indicates that minimizing paths in the vertical direction are `attracted' to the vertical axis, benefiting from low-weight edges in both half-planes. Moreover, our approach extends to the more general situation where edges in the left and right half-planes are assigned weights according to $F_-$ and $F_+$ respectively, but edges on the vertical axis are 
assigned weights according to a third distribution $F_0$ (see Remark~\ref{rem: three_f}). This covers, in particular, the above mentioned case of a columnar defect.

We conclude by mentioning two open problems. First, for which pairs of distributions $(F_-,F_+)$ is a pyramid formed on the limiting shape in the vertical direction? Our theorems give necessary conditions but not sufficient ones. Second, in the case of a columnar defect, meaning $F_+=F_-$, which distributions $F_0$ result in a pyramid on the limiting shape? Again, we know only of necessary conditions. In Section~\ref{sec: random_defects} we introduce a related construction, with defects appearing in each column independently at random. For this model we show (see Theorem~\ref{thm: random_defects}) that at all intensities, the contribution of the defects to the time constant prevails, resulting in a strict change.

\subsection{Convergence towards an asymptotic shape}

Let $F_-$ and $F_+$ denote distribution functions of two probability measures supported on $[0,\infty)$. For each edge $e$ in the set of nearest-neighbour edges $\Ec$ of $\Z^2$, assign a random variable $\tau_e$, according to $F_-$ if at least one endpoint of $e$ lies within the left half-plane (contained in $\{(x_1,x_2)\in\R^2:x_1<0\}$), and according to $F_+$ otherwise. For each nearest-neighbour path $\Gamma$ in $\Z^2$ we let $T(\Gamma):=\sum_{e\in\Gamma}\tau_e$. Distances in the induced random psuedo-metric we refer to as \emph{passage times}, defined for $u,v\in\Z^2$ as
\begin{equation*}
T(u,v):=\inf\{T(\Gamma):\Gamma\text{ is a path from $u$ to $v$}\}\ .
\end{equation*}
To describe our results on radial convergence and the shape theorem we introduce two variables:
\begin{equation*}
Y_- = \min\{\tau_-^{(1)}, \ldots, \tau_-^{(4)}\}\quad \text{and}\quad Y_+ = \min\{\tau_+^{(1)}, \ldots, \tau_+^{(4)}\}\ ,
\end{equation*}
where the $\tau_-^{(i)}$'s are i.i.d.\ with distribution $F_-$ and the $\tau_+^{(i)}$'s are i.i.d.\ with distribution $F_+$.
Extend the function $T(x,y)$ to the full space $\R^2$ by identifying $T(x,y)=T(x',y')$ when $x,y\in \mathbb{R}^2$ and $x',y'\in\Z^2$ satisfy $x\in x'+[0,1)^2$ and $y\in y'+[0,1)^2$.

\begin{thm}\label{thm: radial_convergence}
For any $F_-$ and $F_+$ with $\E Y_- <\infty$ and $\E Y_+<\infty$, there exists a function $\bar \mu : \mathbb{R}^2 \to [0,\infty)$ such that for each $x \in \mathbb{R}^2$,
\begin{equation}\label{eq: mu_bar_def}
\frac{T(0,nx)}{n} \to \bar \mu(x)\quad \text{almost surely and in } L^1.
\end{equation}
\end{thm}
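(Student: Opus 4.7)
My plan is to split by whether $x$ lies on the vertical axis of inhomogeneity. For $x$ on the axis, I would apply Kingman's subadditive ergodic theorem directly, using the vertical translation invariance of the environment. For $x$ off the axis this invariance is broken in the ray direction, so I would instead use the large-deviation estimates for half-plane passage times from \cite{A13-2}.

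\textbf{Vertical directions.} For $x = c\, e_2$ I would set $X_{m,n} := T(mc\, e_2, nc\, e_2)$ for integers $m \le n$. This array is subadditive by the triangle inequality, and $(X_{m+1,n+1})$ has the same joint law as $(X_{m,n})$ by the vertical translation symmetry of the environment. The moment condition $\E X_{0,1} < \infty$ follows from $\E Y_+ < \infty$ (and $\E Y_- < \infty$ near the axis) by the classical Cox--Durrett trick of escaping each endpoint through its minimum-cost incident edge. Kingman then yields $X_{0,n}/n \to \bar\mu(c\, e_2)$ almost surely and in $L^1$; the reflection $(x_1,x_2) \mapsto (x_1,-x_2)$ handles negative multiples, and the $\R^2$-extension of $T$ handles non-integer $c$ by a rescaling argument.

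\textbf{Off-axis directions.} Fix $x=(a,b)$ with $a\ne 0$; by horizontal reflection WLOG $a>0$. The key observation is that the environment restricted to either closed half-plane is homogeneous ($F_+$ on $\{x_1\ge 0\}$, $F_-$ on $\{x_1\le 0\}$; note that vertical edges along the axis carry $F_+$ since both endpoints have $x_1=0$). By \cite{A13-2}, the homogeneous half-plane passage times $T^{\pm,hp}$ obey a shape theorem with exponential large-deviation estimates. I would then decompose any path from $0$ to $nx$ at its visits to the vertical axis into maximal half-plane subpaths, plus a final segment that enters the interior of $\{x_1\ge 0\}$ and ends at $nx$. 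Applying the half-plane shape theorem to each piece and using the large-deviation bounds to control errors uniformly over the decomposition yields $T(0,nx)/n \to \bar\mu(x)$ a.s.\ and in $L^1$. The upper bound comes from an explicit near-optimal construction (a vertical excursion in whichever half-plane is cheaper, followed by a right-half-plane segment into $nx$); the lower bound comes from decomposing an \emph{arbitrary} path from $0$ to $nx$ and bounding each subsegment below using the half-plane shape theorem.

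\textbf{Main obstacle.} The crux is the lower bound in the off-axis case: the number of axis crossings in an optimal path, and the vertical lengths of the corresponding subsegments, are random and a priori unbounded, so one needs uniform large-deviation control over all decompositions. The \cite{A13-2} half-plane estimates supply exactly this; their quantitative, exponentially small nature permits a union bound over decompositions to show that only a finite-dimensional family near the inf-convolution $\inf\{\mu^{+,hp}(y)+\mu^{-,hp}(z) : y+z=x\}$ contributes to the limit, so that the $\liminf$ matches the explicit upper bound.
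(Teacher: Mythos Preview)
Your axis case is fine and matches the paper. The off-axis argument, however, has a genuine gap in the lower bound, and the target value you write down is not the correct one. You aim for the inf-convolution $\inf\{\mu^{+,hp}(y)+\mu^{-,hp}(z):y+z=x\}$, whose value in the $\ebf_2$-direction is $\min\{\mu_+(\ebf_2),\mu_-(\ebf_2)\}$. But the paper shows (Theorem~\ref{thm: strict_inequality}) that $\bar\mu(\ebf_2)$ can be \emph{strictly smaller} than this minimum, and by the formula~\eqref{eq: mubar_formula} this discrepancy propagates to nearby off-axis directions. So your upper-bound construction (``vertical excursion in whichever half-plane is cheaper'') is valid but not tight, and your lower bound, if it worked, would be too large --- hence it cannot work. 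Concretely, the decomposition of an arbitrary path into maximal half-plane segments fails because an optimal path in the pyramid regime may cross the axis $O(n)$ times with $O(1)$ displacements; for such short segments the half-plane large-deviation bound $\Pr(T^{\pm,hp}<\mu_\pm-\eps|y|)\le Me^{-\gamma|y|}$ gives nothing useful, and a union bound over all decompositions is hopeless.

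The paper's fix is to avoid the full decomposition entirely. The key identity is the variation formula
\[
T(0,nx)\;=\;\inf_{k\in\Z}\big[\,T(0,k\ebf_2)+T_+(k\ebf_2,nx)\,\big]\qquad(x\in\Hb_+),
\]
obtained by cutting only at the \emph{last} axis visit. The first term is the full inhomogeneous passage time along the axis, whose limit $\bar\mu(k\ebf_2)=|k|\bar\mu(\ebf_2)$ you already have from Kingman; the second is a \emph{single} half-plane passage time with displacement of order $n$, to which the large-deviation estimates of Proposition~\ref{prop: homogeneousLDE} apply cleanly. For the lower bound one then needs a union bound only over $k\in\Z$ (one-dimensional), with far-away $k$ handled by $T(0,k\ebf_2)\to\infty$. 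The upper bound likewise uses $T(0,an\ebf_2)$ (not a single half-plane) for the axis leg. This yields the correct limit $\bar\mu(x)=\min_{a\in\R}[\bar\mu(a\ebf_2)+\mu_+(x-a\ebf_2)]$, which is what your sketch is missing.
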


The shape theorem that we will prove describes how the set $\Wc_t\subset\R^2$, given by
\begin{equation*}
\Wc_t:=\{x\in\R^2:T(0,x)\le t\}\ ,
\end{equation*} 
compares asymptotically to the set
\begin{equation*}
\overline\Wc:=\{x\in\R^2:\bar\mu(x)\le1\}\ .
\end{equation*}
We write $|\cdot|$ for Euclidean distance on $\R^2$.

\begin{thm}\label{thm: shape_theorem}
Assuming that $\E Y_-^2  < \infty$ and $\E Y_+^2<\infty$,
\begin{equation}\label{eq: inverted_shape}
\limsup_{z\in\Z^2:\,|z| \to \infty} \frac{|T(0,z)-\bar{\mu}(z)|}{|z|} = 0 \quad\text{almost surely}\ .
\end{equation}
If, in addition, $\max\{F_-(0),F_+(0)\}<p_c$, the critical probability for bond percolation on $\mathbb{Z}^2$, then the set $\overline\Wc$ is convex and compact with non-empty interior, and for every $\eps>0$, almost surely,
$$
(1-\eps)\overline\Wc\subset\tfrac{1}{t}\Wc_t\subset(1+\eps)\overline\Wc\quad\text{for large enough }t\ .
$$
\end{thm}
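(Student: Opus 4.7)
I would follow the classical Cox--Durrett route to the shape theorem, replacing those ingredients that rely on full translation invariance by analogues tailored to our two-half-plane environment. Theorem~\ref{thm: radial_convergence} supplies pointwise radial convergence, and the task is to promote it first to the uniform statement \eqref{eq: inverted_shape} and then to the set-valued shape theorem.

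\textbf{Regularity of $\bar\mu$ and the inverted shape.} The properties to establish are 1-homogeneity, subadditivity, and the linear upper bound $\bar\mu(x)\le C|x|$, the last coming from a deterministic axis-aligned path together with $\E Y_\pm<\infty$. Homogeneity is immediate from \eqref{eq: mu_bar_def}. For subadditivity I would use $T(0,n(x+y))\le T(0,nx)+T(nx,n(x+y))$: when $x$ is vertical the second term is handled by vertical translation invariance, and otherwise $nx$ sits deep in one half-plane, so that $n^{-1}T(nx,n(x+y))$ can be compared to a homogeneous time constant through the half-plane large deviation estimates of \cite{A13-2}. These ingredients together force $\bar\mu$ to be Lipschitz on $\R^2$. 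For \eqref{eq: inverted_shape} I would then fix $\eps>0$, choose an $\eps$-net $\theta_1,\ldots,\theta_K$ of the unit circle, and combine radial convergence in those directions with a uniform control on local fluctuations of $T$; the hypothesis $\E Y_\pm^2<\infty$ enters here exactly in the standard Cox--Durrett Borel--Cantelli form, giving that for every $\eps>0$, almost surely for all large $n$,
\begin{equation*}
\sup\bigl\{T(z,w):z,w\in\Z^2,\;|z|,|w|\le n,\;|z-w|\le\eps n\bigr\}\le C\eps\,n,
\end{equation*}
on either side of the axis. Combined with Lipschitz continuity of $\bar\mu$ this closes \eqref{eq: inverted_shape}.

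\textbf{The set-valued shape under $F_\pm(0)<p_c$.} Subadditivity and homogeneity of $\bar\mu$ give convexity of $\overline{\Wc}$, and the linear upper bound yields $\overline{\Wc}\supset\{x\in\R^2:|x|\le 1/C\}$, so it has non-empty interior. Compactness amounts to $\bar\mu(x)>0$ for every $x\ne 0$; I would extend Kesten's classical argument to our setting, observing that $F_\pm(0)<p_c$ forces the zero-weight edges to form a subcritical percolation in each half-plane and hence globally by stochastic domination, so that any path from $0$ to $nx$ must carry a linear number of strictly positive edges. The two-sided inclusion $(1-\eps)\overline{\Wc}\subset t^{-1}\Wc_t\subset(1+\eps)\overline{\Wc}$ for large $t$ is then a routine consequence of \eqref{eq: inverted_shape} combined with strict positivity of $\bar\mu$.

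\textbf{Main obstacle.} The delicate step is the proof of subadditivity of $\bar\mu$: the usual translation argument does not go through, because shifting from $0$ to $nx$ lands one in an environment differing from that near the origin. This is the place where the two-half-plane structure has to be exploited most carefully, and I expect it to be the only step genuinely requiring tools beyond the classical shape-theorem machinery.
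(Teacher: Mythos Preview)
Your overall plan for \eqref{eq: inverted_shape} is correct and agrees with the paper's: fix an $\eps$-net $u_1,\dots,u_N$ of the unit circle, invoke Theorem~\ref{thm: radial_convergence} along each $u_i$, bound $|\bar\mu(|z|u_i)-\bar\mu(z)|$ by Lipschitz continuity of $\bar\mu$, and control $T(|z|u_i,z)$ by a Borel--Cantelli argument that consumes the second-moment hypothesis. Two remarks on execution. First, the displayed supremum over \emph{all} pairs $z,w$ with $|z|,|w|\le n$ and $|z-w|\le\eps n$ is stronger than what $\E Y_\pm^2<\infty$ delivers; a union bound over that many pairs would require a higher moment. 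The paper (and the standard argument) uses a single comparison point $|z|u_i$ per $z$ and shows $\sum_{z\in\Z^2}\Pr\big(T(|z|u_i,z)>\eps|z|/3\big)<\infty$, which is exactly where the second moment enters. Second, rather than arguing separately in each half-plane, the paper dominates globally by the homogeneous environment with distribution $\Fhi=\min\{F_-,F_+\}$: one has $T\le T_{\textup{dom}}$ everywhere, and then Proposition~\ref{prop: homogeneousLDE} for $T_{\textup{dom}}$ (together with Lemma~\ref{lem: cd_moment_bound} to transfer the moment condition) closes the Borel--Cantelli sum. The same domination gives $\bar\mu(x)\le\bar\mu(y)+\muhi(x-y)$ and hence Lipschitz continuity directly, so \eqref{eq: inverted_shape} does not actually need subadditivity of $\bar\mu$.

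The genuine gap is in your route to subadditivity, which you correctly flag as the main obstacle but do not resolve. For non-vertical $x$ (say $x_1>0$), your translation argument compares $T(nx,n(x+y))$ with a half-plane passage time and yields only
\[
\bar\mu(x+y)\;\le\;\bar\mu(x)+\mu_+(y)\,,
\]
not $\bar\mu(x+y)\le\bar\mu(x)+\bar\mu(y)$. Since $\bar\mu(y)$ can be strictly smaller than $\mu_+(y)$ --- this is precisely the phenomenon of Theorem~\ref{thm: strict_inequality} --- the inequality does not close, and with it the convexity of $\overline\Wc$ remains unproved. The paper bypasses the difficulty: in the course of proving Theorem~\ref{thm: radial_convergence} it establishes the explicit formula
\[
\bar\mu(x)\;=\;\min_{a\in\R}\big[\,\bar\mu(a\ebf_2)+\mu_\pm(x-a\ebf_2)\,\big]\qquad(x\in\Hb_\pm)
\]
(this is~\eqref{eq: mubar_formula}), and then derives subadditivity, Lipschitz continuity, and the equivalence $\bar\mu(x)\ne0\ \forall x\ne0\Leftrightarrow\max\{F_-(0),F_+(0)\}<p_c$ purely algebraically from it (Proposition~\ref{prop: mubar_properties}); even there the subadditivity proof needs a careful case split when $x$ and $y$ lie in opposite half-planes. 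With those properties in hand, convexity and compactness of $\overline\Wc$ (Proposition~\ref{prop: shape_properties}) and the set-valued inclusion follow as you indicate.
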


\begin{remark}\label{rem: low_moment}
As in the homogeneous case, if either $Y_-$ or $Y_+$ has infinite mean, then the above almost sure and $L^1$-convergence in~\eqref{eq: mu_bar_def} fails for all $x$ in the interior of the respective half-plane. However, for arbitrary $F_+$ and $F_-$, the convergence in~\eqref{eq: mu_bar_def} holds in probability. A similar weakening holds also for~\eqref{eq: inverted_shape}: For arbitrary $F_-$ and $F_+$
\[
\limsup_{z\in\Z^2:\,|z|\to\infty}\Pr\big(|T(0,z)-\bar\mu(z)|>\eps|z|\big)=0\ .
\]
We omit the argument, but mention that a proof would follow along the lines of Cox and Durrett~\cite[Theorem~1]{coxdur81}. One defines approximate passage times $\hat T(x,y)$ between circuits of low-weight edges encircling each of $x$ and $y$, and shows that $\{T(x,y) - \hat T(x,y) : x,y \in \mathbb{Z}^2\}$ is tight.
\end{remark}

\subsection{Properties of the asymptotic shape}

Below, we characterize the `time constant' $\bar\mu$ in terms of time constants in homogeneous environments, and this gives a representation for the asymptotic shape. Generally, the shape $\overline\Wc$ is the closed convex hull of the two homogeneous shapes (restricted to their respective half-planes) and a symmetric interval on the $\mathbf{e}_2$-axis of width $2\bar{\mu}(\mathbf{e}_2)^{-1}$, where $\ebf_i$ denotes the $i$th coordinate vector. Define
\bea
\begin{aligned}
\Hb_- := \{(x_1,x_2) \in \mathbb{R}^2 : x_1 \le 0\} \quad&\text{and}\quad   \Hb_+ := \{(x_1,x_2) \in \mathbb{R}^2 : x_1 \ge 0\}\ ,\\
\Wc_-:=\{x\in\Hb_-:\mu_-(x)\le1\} \quad&\text{and}\quad \Wc_+:=\{x\in\Hb_+:\mu_+(x)\le1\}\ .
\end{aligned}
\eea

\begin{thm}\label{thm: mu_bar_properties}
Assume that $\E Y_-$ and $\E Y_+$ are finite. The function $\bar \mu:\R^2\to[0,\infty)$ is described by the formula
\begin{equation}\label{eq: mubar_formula}
\bar \mu(x)=\left\{
\begin{aligned}
\min_{a \in \mathbb{R}} \left[ \bar \mu(a \mathbf{e}_2) + \mu_-(x-a\mathbf{e}_2) \right]& \quad\text{ for } x \in \Hb_-\ ,\\
\min_{a \in \mathbb{R}} \left[ \bar \mu(a \mathbf{e}_2) + \mu_+(x-a \mathbf{e}_2) \right] &\quad\text{ for } x \in \Hb_+\ .
\end{aligned}
\right.
\end{equation}
Further, $\bar\mu$ is sub-additive and positive homogeneous, and $\overline\Wc$ equals the closed convex hull
\begin{equation}\label{eq: convex_hull}
\overline\Wc = \textup{convex hull}\Big[\, \Wc_- \cup\, \Wc_+ \cup \{0\}\!\times\!\left[-\bar \mu(\mathbf{e}_2)^{-1}, \bar \mu(\mathbf{e}_2)^{-1}\right]\Big]\ .
\end{equation}
\end{thm}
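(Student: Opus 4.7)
I would establish the formula (1.4) first, derive positive homogeneity directly from the definition, then deduce convexity of $\overline\Wc$ and from this extract both sub-additivity and the convex-hull representation (1.5).

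\textbf{Proof of the formula (1.4).} I treat the case $x \in \Hb_+$; the case $x \in \Hb_-$ is symmetric. For the upper bound I fix $a \in \R$ and use
\[
T(0, nx) \le T\bigl(0, \lfloor na \rfloor \ebf_2\bigr) + T\bigl(\lfloor na \rfloor \ebf_2, nx\bigr).
\]
Theorem~\ref{thm: radial_convergence} gives $T(0,\lfloor na\rfloor\ebf_2)/n \to \bar\mu(a\ebf_2)$, and I bound the second term above by the $\Hb_+$-restricted passage time, which uses only $F_+$-edges. By vertical translation invariance its law equals that of the $\Hb_+$-restricted $F_+$-passage time from $0$ to $n(x - a\ebf_2)$, and the half-plane shape theorem for homogeneous $F_+$-FPP (as in~\cite{A13-2}) gives the limit $\mu_+(x - a\ebf_2)$. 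Minimising over $a$ yields $\le$ in (1.4). For the lower bound I take a near-geodesic $\pi_n$ from $0$ to $nx$ and let $z_n = (0, y_n)$ denote its last intersection with the vertical axis. Since $\pi_n$ stays in $\Hb_+$ after $z_n$,
\[
T(0, nx) \ge T(0, z_n) + T^{\Hb_+}(z_n, nx).
\]
The a-priori bound $\bar\mu(v) \ge |v_2|\bar\mu(\ebf_2)$ established below forces $|y_n|/n$ to be bounded, and along a subsequence $y_n/n \to a$. Then $T(0, z_n)/n \to |a|\bar\mu(\ebf_2)$ (combining radial convergence on the axis with vertical translation invariance to handle the moving endpoint) and $T^{\Hb_+}(z_n, nx)/n \to \mu_+(x - a\ebf_2)$ (half-plane shape theorem), giving $\ge$ in (1.4).

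\textbf{Homogeneity, convexity, sub-additivity, and the convex hull (1.5).} Positive homogeneity is immediate: for $c > 0$, $T(0, n(cx))/n = c\cdot T(0,(nc)x)/(nc) \to c\bar\mu(x)$. The reflection $x_2 \mapsto -x_2$ preserves the distribution of $(\tau_e)_e$, so $\bar\mu(-\ebf_2) = \bar\mu(\ebf_2)$ and thus $\bar\mu(a\ebf_2) = |a|\bar\mu(\ebf_2)$. Routing vertically through the more favourable half-plane gives $\bar\mu(\ebf_2) \le \min\{\mu_-(\ebf_2),\mu_+(\ebf_2)\}$, and convexity together with lattice symmetries of $\Wc_\pm$ yields $\mu_\pm(w) \ge |w_2|\mu_\pm(\ebf_2)$ for every $w \in \R^2$. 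Plugging these into (1.4) gives, for $v \in \Hb_+$ (and analogously for $\Hb_-$),
\[
\bar\mu(v) \ge \min_{a \in \R}\bigl[|a|\bar\mu(\ebf_2) + |v_2-a|\mu_+(\ebf_2)\bigr] \ge \bar\mu(\ebf_2)\,\min_{a \in \R}\bigl(|a|+|v_2-a|\bigr) = |v_2|\bar\mu(\ebf_2),
\]
so every $v \in \overline\Wc$ satisfies $|v_2| \le \bar\mu(\ebf_2)^{-1}$. The formula (1.4) exhibits $\bar\mu|_{\Hb_\pm}$ as the infimal convolution of the convex functions $a \mapsto |a|\bar\mu(\ebf_2)$ and $\mu_\pm$, so $\overline\Wc \cap \Hb_\pm$ are both convex. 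For $p \in \overline\Wc \cap \Hb_-$ and $q \in \overline\Wc \cap \Hb_+$, the segment $[p,q]$ meets the axis at a point whose second coordinate is at most $\bar\mu(\ebf_2)^{-1}$ in absolute value, hence in the axial segment of $\overline\Wc$. This gives convexity of all of $\overline\Wc$, and positive homogeneity then identifies $\bar\mu$ with the Minkowski functional of $\overline\Wc$, which is sub-additive. For (1.5), the inclusion $\overline\Wc \subset$ convex hull follows from (1.4) by writing $x/\bar\mu(x)$ at an optimal $a$ as a convex combination of the endpoint $\sgn(a)\bar\mu(\ebf_2)^{-1}\ebf_2$ of the axial segment and the boundary point $(x-a\ebf_2)/\mu_+(x-a\ebf_2)$ of $\Wc_+$; the reverse inclusion uses that $\Wc_\pm$ and the axial segment all lie in $\overline\Wc$ (by (1.4) with $a=0$ and by axis linearity) together with the just-established convexity of $\overline\Wc$.

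\textbf{Main obstacle.} The delicate step is the lower bound in (1.4). Because only radial convergence is available under $\E Y_\pm < \infty$, showing $T(0, z_n)/n \to |a|\bar\mu(\ebf_2)$ when $y_n/n \to a$ requires supplementing radial convergence with a triangle-inequality argument along the axis, using vertical translation invariance to compare $T(0, z_n)$ with $T(0,\lfloor na\rfloor\ebf_2)$; the corresponding uniform control on $T^{\Hb_+}(z_n, nx)/n$ as $z_n$ moves is supplied by the half-plane shape theorem of~\cite{A13-2}.
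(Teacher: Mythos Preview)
Your overall architecture is sound and in places cleaner than the paper's: deriving sub-additivity by recognising $\bar\mu|_{\Hb_\pm}$ as an infimal convolution, hence convex, and then reading off sub-additivity from the Minkowski functional of $\overline\Wc$, is more elegant than the paper's direct case analysis (Proposition~4.2(b)), which has to handle $x\in\Hb_-$, $y\in\Hb_+$ by hand. Your gluing argument for convexity across the axis is also nice.

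However, your lower bound for~(1.4) has a genuine gap. First, there is a circularity: you invoke the bound $\bar\mu(v)\ge|v_2|\bar\mu(\ebf_2)$ to force $|y_n|/n$ bounded, but you derive that bound only \emph{after} establishing~(1.4). What you actually need is the radial limit $T(0,k\ebf_2)/|k|\to\bar\mu(\ebf_2)$ along the axis; when $\bar\mu(\ebf_2)>0$ this does give boundedness, since $T(0,z_n)\le T(\pi_n)$. But when $\bar\mu(\ebf_2)=0$ (which can happen while $\mu_+\not\equiv0$, so the formula is non-trivial), nothing in your argument bounds $|y_n|/n$, and your subsequence extraction breaks down. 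The paper handles this case separately (Claim~4, first paragraph): one observes that then $\nu(x)=\mu_+(x_1\ebf_1)$ and uses only the uniform half-plane lower deviation bound $T_+(k\ebf_2,nx)\ge(1-\delta)\mu_+(nx-k\ebf_2)\ge(1-\delta)\mu_+(nx_1\ebf_1)$, valid simultaneously over all $k$, with no need to localise $k$.

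More broadly, the paper's route to the lower bound avoids subsequences altogether. It proves \emph{uniform} statements: for every $\eps>0$ there is $N$ such that for $n\ge N$ one has $T(0,k\ebf_2)\ge\bar\mu(k\ebf_2)-\eps n/2$ for \emph{all} $|k|\le bn$ (from axis convergence), and $T_+(k\ebf_2,nx)\ge\mu_+(nx-k\ebf_2)-\eps n/2$ for \emph{all} $k$ (from the exponential lower-tail estimate in Proposition~2.2). Summing and minimising over $k$ then gives the bound directly via the identity $T(0,nx)=\inf_k[T(0,k\ebf_2)+T_+(k\ebf_2,nx)]$. Your ``triangle-inequality plus translation-invariance'' plan to control $T(0,z_n)$ for a \emph{random} $z_n$ is not quite enough: distributional identity does not transfer to an almost-sure statement when the endpoint depends on the configuration. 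You will end up needing exactly these uniform-in-$k$ estimates, at which point the subsequence is unnecessary.
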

\noindent
From~\eqref{eq: convex_hull} we also see that if $F_+(0) \geq p_c$ but $F_-(0) < p_c$, then limit shape is a half-plane.

The interval on the $\mathbf{e}_2$-axis described in the last theorem gives the possibility of an additional `pyramid' in the coordinate direction for $\overline\Wc$.
\begin{figure}[htbp]
\begin{center}
\includegraphics[width=0.7\textwidth]{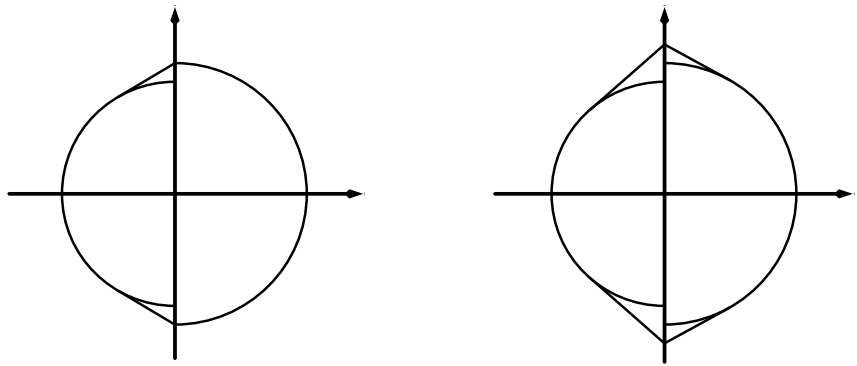}
\end{center}
\caption{
Schematic exhibiting the structure of the asymptotic shape $\overline\Wc$. The left picture is known to be the case when $F_+$ is more variable than $F_-$.
}
\label{fig: shape_thm}
\end{figure}
In this case, optimal paths in the $\mathbf{e}_2$-direction are able to benefit from low-weight edges in both half-planes better than if they were to remain in one of them, and will thus feel an `attraction' towards the $\ebf_2$-axis. However, we will see below that if one of $F_-$ and $F_+$ dominates the other (in a certain concave ordering), then $\bar{\mu}(\ebf_2)$ equals either $\mu_-(\ebf_2)$ or $\mu_+(\ebf_2)$, and the statement in~\eqref{eq: convex_hull} is reduced to
$$
\overline\Wc=\textup{convex hull}\big[\,\Wc_-\cup\,\Wc_+\big]\ ,
$$
with no additional pyramid. We do not completely understand the mechanism that determines whether strict inequality holds. 

For the statement of the next theorem, we say that $F_1$ if \emph{more variable} than $F_2$ if
\[
\int \phi(x)\,\text{d}F_1(x) \leq \int \phi(x)\,\text{d}F_2(x)
\]
for every concave non-decreasing function $\phi:\mathbb{R} \to \mathbb{R}$ for which the two integrals above converge absolutely. In this case we write $F_1 \prec F_2$. This terminology was introduced to first-passage percolation by van den Berg and Kesten~\cite[Definition~2.1]{vdbkes93}, who used it to prove inequalities between time constants for different distributions. Note, in particular, that if $F_-$ stochastically dominates $F_+$ (that is $F_-(x)\le F_+(x)$ for all $x$), then $F_+$ is more variable than $F_-$.

\begin{thm}\label{thm: domination}
Assuming that $\E Y_-$ and $\E Y_+$ are finite,
\[
\bar{\mu}(\ebf_2) \le \min\{\mu_-(\ebf_2),\mu_+(\ebf_2)\}\ ,
\]
where equality holds if either of $F_-$ and $F_+$ is more variable than the other.
\end{thm}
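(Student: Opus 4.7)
For the upper bound, consider two explicit paths from $0$ to $n\ebf_2$. A half-plane $F_+$-geodesic in $\Hb_+$ uses only edges with both endpoints in $\Hb_+$, each of which carries its $F_+$-weight in the inhomogeneous environment as well; by the half-plane shape theorem from~\cite{A13-2}, its weight grows like $\mu_+(\ebf_2)n$, so $\bar\mu(\ebf_2)\le \mu_+(\ebf_2)$. Analogously, concatenating the edge from $0$ to $-\ebf_1$, an $F_-$-geodesic in the shifted half-plane $\{x\in\R^2:x_1\le -1\}$ joining $-\ebf_1$ to $n\ebf_2-\ebf_1$, and the edge from $n\ebf_2-\ebf_1$ to $n\ebf_2$ produces a path whose edges all have at least one endpoint with $x_1<0$, hence all carry $F_-$-weights; this yields $\bar\mu(\ebf_2)\le \mu_-(\ebf_2)$.

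For the equality, by the symmetry $x_1\mapsto -x_1$ exchanging $F_-$ and $F_+$, we may assume $F_-\prec F_+$. Van den Berg-Kesten~\cite{vdbkes93} then gives $\mu_-(\ebf_2)\le \mu_+(\ebf_2)$, so $\min\{\mu_-, \mu_+\}(\ebf_2)=\mu_-(\ebf_2)$, and in view of the upper bound it remains to prove $\bar\mu(\ebf_2)\ge \mu_-(\ebf_2)$. The key observation behind~\cite{vdbkes93} is that $T(0,n\ebf_2)$, viewed as a function of a single edge-weight $\tau_e$ with all other weights fixed, equals $\min(\tau_e+C_e, D_e)$ for suitable non-negative constants (the minima over paths through $e$ and over paths avoiding $e$, respectively), and is therefore concave and non-decreasing in $\tau_e$. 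By the definition of $\prec$, re-sampling $\tau_e$ from $F_+$ rather than $F_-$ can then only weakly increase $\E\,T(0,n\ebf_2)$.

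Apply this observation edge by edge: starting from the fully homogeneous $F_-$-environment and re-sampling the weight of every edge with both endpoints in $\Hb_+$ to have distribution $F_+$ instead of $F_-$, one reaches exactly the inhomogeneous environment (all remaining edges already carry $F_-$-weights in both models). Because $F_-\prec F_+$, each replacement weakly increases the mean, yielding
\[
\E\,T^{\textup{inh}}(0,n\ebf_2)\;\ge\;\E\,T^{\textup{hom},F_-}(0,n\ebf_2).
\]
Dividing by $n$ and letting $n\to\infty$, the $L^1$-convergence from Theorem~\ref{thm: radial_convergence} on the left and the Cox-Durrett $L^1$-convergence on the right (both valid under $\E Y_-<\infty$) give $\bar\mu(\ebf_2)\ge \mu_-(\ebf_2)$, completing the proof.

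The main modest subtlety is to make the edge-by-edge replacement rigorous despite involving countably many edges, which can be handled by first restricting the minimum defining $T(0,n\ebf_2)$ to paths of length at most $L$, performing the finite replacement for each $L$, and then letting $L\to\infty$ via monotone convergence. Beyond this, the argument synthesizes only the half-plane shape theorem (for the upper bound) and the van den Berg-Kesten concavity trick (for the lower bound), and no new large-scale estimate is required.
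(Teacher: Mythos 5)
Your proposal is correct and follows essentially the same route as the paper: the upper bound is the paper's Lemma~\ref{lem: mubar_bound} (subadditivity plus the half-plane estimate of Proposition~\ref{prop: homogeneousLDE}), and the equality is exactly the van den Berg--Kesten edge-by-edge resampling, with $T$ a concave non-decreasing function of each single edge weight, carried out on a finite truncation (your length-$L$ cutoff playing the role of the paper's restriction to the box $[-N,N]^2$) and then passed to the limit. The only point worth making explicit in your truncation step is that, since individual weights need not have finite mean under $\E Y_\pm<\infty$ alone, one should first secure $\E T^{(L)}(0,n\ebf_2)<\infty$ via the four-edge-disjoint-paths bound (Lemma~\ref{lem: cd_moment_bound}) and invoke Fubini so that the integrals $\int g(t)\,\mathrm{d}F_\pm(t)$ in the more-variable comparison converge absolutely, just as the paper does.
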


If strict inequality holds above, then $\overline\Wc$ has a pyramid in the coordinate direction $\ebf_2$. The final theorem we state here shows that there are examples that display this behavior.

\begin{thm}\label{thm: strict_inequality}
There exist non-degenerate weight distributions $F_-$ and $F_+$ such that
\[
\bar \mu(\ebf_2) < \min \{\mu_-(\ebf_2),\mu_+(\ebf_2)\}\ .
\]
\end{thm}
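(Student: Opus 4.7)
The plan is to exhibit an explicit pair $(F_-,F_+)$ and compare $\bar\mu(\ebf_2)$ with $\mu_\pm(\ebf_2)$ via a concrete path construction. By Theorem~\ref{thm: domination}, the chosen pair must be icv-incomparable, otherwise equality is forced. My candidate is $F_+=\delta_c$ deterministic at some $c>0$ together with $F_-=p\,\delta_0+(1-p)\,\delta_a$ with $p<p_c$ and $a>c>0$. A direct computation of the integrated survival functions $t\mapsto\int_t^\infty(1-F_\pm(s))\,ds$ shows that $F_-\prec F_+$ holds exactly when $c\ge(1-p)\,a$, while $F_+\prec F_-$ fails for every $p>0$ (test against the family $\phi(x)=\min(x,\eta)$ for $\eta<c$, which gives $\eta\le(1-p)\eta$). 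Hence the pair is icv-incomparable whenever $c\in(0,(1-p)\,a)$.

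To upper-bound $\bar\mu(\ebf_2)$, I would exhibit a specific path from $\org$ to $n\ebf_2$. The path traverses column $0$ by default using the deterministic $F_+$-edges of weight $c$. At each even level $k=0,2,4,\ldots$ it examines the three $F_-$-edges forming the potential detour through column $-1$: the two horizontal edges $(0,k)$--$(-1,k)$ and $(-1,k+1)$--$(0,k+1)$ and the vertical edge $(-1,k)$--$(-1,k+1)$. When all three happen to be zero, the path takes the detour at cost $0$; otherwise it uses the cost-$c$ direct edge. Since detours at distinct even levels rely on disjoint edges, the resulting contributions are i.i.d., and the law of large numbers yields almost surely
\[
\bar\mu(\ebf_2)\;\le\;c\!\left(1-\tfrac{p^3}{2}\right)\;<\;c\;=\;\mu_+(\ebf_2).
\]

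For the corresponding strict inequality against $\mu_-(\ebf_2)$, I would invoke the continuity of the homogeneous time constant with respect to the weight distribution (standard since the weights are bounded here). Because $F_-$ converges weakly to $\delta_a$ as $p\to0$, we have $\mu_-(\ebf_2)\to a$, so for any fixed $c<a$ one can choose $p$ small enough that $\mu_-(\ebf_2)>c$. Combined with the icv-incomparability condition $c<(1-p)\,a$, also satisfied for small $p$, this gives
\[
\min\{\mu_-(\ebf_2),\mu_+(\ebf_2)\}\;=\;c\;>\;c\!\left(1-\tfrac{p^3}{2}\right)\;\ge\;\bar\mu(\ebf_2),
\]
which is the content of Theorem~\ref{thm: strict_inequality}.

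The main obstacle is the continuity step: one must know quantitatively that $\mu_-(\ebf_2)$ converges to $a$ as $p\to0$ in order to pick parameters witnessing the strict inequality. An alternative that avoids continuity is a direct percolation-type lower bound: for $p<p_c$ there is no infinite cluster of zero-edges, so any vertical path of length $n$ must contain a linear fraction of $a$-edges, giving a bound of the form $\mu_-(\ebf_2)\ge(1-Kp)\,a$ for some constant $K=K(a)$. Choosing $p$ small compared to the gap $a-c$ then closes the argument.
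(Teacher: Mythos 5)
Your construction itself is sound, and it is essentially the paper's idea in a slightly different guise: put a (near-)deterministic distribution on one side of the axis and a distribution with cheap values on the other, run a path along the $\ebf_2$-axis that takes a short detour into the favourable half-plane whenever a fixed finite set of edges there is cheap, and apply the law of large numbers together with Theorem~\ref{thm: radial_convergence} to get $\bar\mu(\ebf_2)\le c(1-\tfrac{p^3}{2})<c=\mu_+(\ebf_2)$. Your detour events at distinct even levels are indeed i.i.d.\ Bernoulli$(p^3)$, so that step is fine. Two remarks on the comparison with $\mu_-(\ebf_2)$: the Cox--Kesten continuity theorem you invoke is a correct external result, but it is not needed, and your alternative bound $\mu_-(\ebf_2)\ge(1-Kp)a$ is asserted without proof (an elementary path-counting argument only gives something like $1-C/\log(1/p)$, not linearity in $p$). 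The cleanest route is: fix any $p<p_c$; then $F_-(0)=p<p_c$ gives $\mu_-(\ebf_2)>0$ by \eqref{eq: kesten_condition}, and you may simply choose $c<\mu_-(\ebf_2)$ afterwards. Also, the explicit verification of icv-incomparability is a nice sanity check but is not logically required -- it follows a posteriori from Theorem~\ref{thm: domination} once the strict inequality is established.

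The genuine gap is the non-degeneracy requirement: Theorem~\ref{thm: strict_inequality} asks for \emph{both} $F_-$ and $F_+$ non-degenerate, and your $F_+=\delta_c$ is a point mass. As written you have proved only the intermediate statement with one degenerate distribution -- precisely the stage at which the paper's own proof stops and says the theorem is not yet proved, before running a limiting argument. You need the analogous perturbation step: replace $\delta_c$ by, say, $F_+^{(m)}=(1-\tfrac1m)\delta_c+\tfrac1m\,\delta_{2c}$. Since $\E T(0,n\ebf_2)$ is subadditive along the $\ebf_2$-axis, $\bar\mu_m(\ebf_2)=\inf_n \tfrac1n \E T^{(m)}(0,n\ebf_2)$; choose $n$ with $\tfrac1n\E T(0,n\ebf_2)<c(1-\tfrac{p^3}{4})$ and use monotone (or dominated) convergence as $m\to\infty$ to keep $\bar\mu_m(\ebf_2)<c$, while $F_+^{(m)}$ stochastically dominates $\delta_c$, so $\mu_+^{(m)}(\ebf_2)\ge c$ and $\mu_-(\ebf_2)$ is untouched. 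With that addition your argument closes and coincides in structure with the paper's; the only real difference is that your detour exploits an atom at $0$ in $F_-$, whereas the paper's detour has length $K+2$ and only needs some support point $y$ strictly below the time constant, which makes their example more flexible.
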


In Section~\ref{sec: random_defects} we will prove a related result for the homogeneous model with edge-weights given by $F$ and columnar defects given by $F_0$ introduced at random locations: For a large class of distributions $F$ it suffices that $F_0\prec F$ and $F_0\neq F$ for the time constant in the vertical direction to be strictly smaller than $\mu_F$, regardless of the density at which defects are introduced.

\section{Preliminaries}\label{sec: notation}

To derive properties of the inhomogeneous model, we will when possible rely on known facts about the homogeneous case. Some of these facts will be recalled in this section.

Let $F$ be a distribution function for a probability measure supported on $[0,\infty)$ and let $Y_F$ denote the minimum of $4$ independent random variables distributed as $F$. Let $T_F$ denote travel times on $\Z^2$ in an homogeneous i.i.d.\ environment generated by $F$. A small extension of the arguments in~\cite[Theorem~4]{coxdur81} (also see Sections~2.2 and A of \cite{A13}) show that for every $F$ there is a function $\mu_F:\R^2\to[0,\infty)$ such that for every $x\in\R^2$
\be\label{eq:radialhom}
\lim_{n\to\infty}\frac{T_F (0,nx)}{n}=\mu_F(x)\quad\text{in probability}\ .
\ee
If $\E Y_F<\infty$, then the convergence also holds almost surely and in $L^1$, as a consequence of the Subadditive Ergodic Theorem of~\cite{kingman68}. Kesten~\cite[Theorem~6.1]{kesten86} identified the condition for $\mu_F$ to be non-degenerate: 
\begin{equation}\label{eq: kesten_condition}
\mu_F(x)\neq0 \text{ for some } x \neq 0 \quad\Leftrightarrow\quad\mu_F(x)\neq0\text{ for all }x\neq0\quad\Leftrightarrow\quad F(0)<p_c\ ,
\end{equation}
where $p_c = 1/2$ denotes the critical threshold for bond percolation on the lattice $\Z^2$. $\mu_F$ is a semi-norm on $\R^2$, so for $x=(x_1,x_2)\in\R^2$, $\mu_F(x) \leq |x_1|\mu_F(\mathbf{e}_1) + |x_2|\mu_F(\mathbf{e}_2)$, and thus
\begin{equation}\label{eq: lipschitz}
|\mu_F(x)-\mu_F(y)| \,\leq\, \mu_F(x-y) \,\leq\, \mu_F(\ebf_1)\|x-y\|_1\quad\text{for } x,y \in \mathbb{R}^2\ ,
\end{equation}
where we use $\|\cdot\|_1$ to denote $\ell^1$-distance.

At times we will couple the homogeneous and inhomogeneous models. Given the distribution function $F$, define the right-continuous inverse 
\[
F^{-1}(u):=\min\{x\in\R:F(x)\ge u\}\ .
\] 
If $\xi$ is uniformly distributed on the interval $[0,1]$, then $F^{-1}(\xi)$ is distributed according to $F$. The inhomogeneous collection $\{\tau_e\}_{e\in\Ec}$ of edge-weights of the lattice can thus be obtained by setting $\tau_e = F^{-1}_-(\xi_e)$ for edges with an endpoint in the interior of $\Hb_-$, and $F^{-1}_+(\xi_e)$ for all other edges, where $\{\xi_e\}_{e\in\Ec}$ is a collection of independent random variables uniform on $[0,1]$. This implies
\begin{equation}\label{eq: domination}
\Flow^{-1}(\xi_e) \,\leq\, \tau_e \,\leq\, \Fhi^{-1}(\xi_e)\quad\text{for every }e\in\Ec\ ,
\end{equation}
where $\Flow:=\max\{F_-,F_+\}$ and $\Fhi:=\min\{F_-,F_+\}$. The inequalities for the pointwise coupling in~\eqref{eq: domination} carry over to passage times between sites:
\bea
T_{\Flow}(x,y)\,\le\, T(x,y)\,\le\,T_{\Fhi}(x,y)\quad\text{for all }x,y\in\R^2\ .
\eea
Note, in particular, that if $F_-$ dominates $F_+$ (meaning $F_-(x)\le F_+(x)$ for all $x\ge0$), then $\Fhi=F_-$ and $\Flow=F_+$.

In addition,~\eqref{eq: domination} gives rise to a comparison of means.

\begin{lma}\label{lem: cd_moment_bound}
Let $\Yhi$ denote the minimum of $4$ independent random variables distributed according to $\Fhi$. For any $\alpha>0$
$$
\max\{\E Y_-^\alpha ,\E Y_+^\alpha\}\;\le\;\E \Yhi^\alpha \;\le\;8\,(\E Y_-^\alpha +\E Y_+^\alpha)\ .
$$
In particular, for every $\alpha>0$ and $z\in\R^2$, $\E T(0,z)^\alpha$ is finite if both $\E Y_-^\alpha$ and $\E Y_+^\alpha$ are.
\end{lma}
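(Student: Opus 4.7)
The plan is to reduce the two inequalities to elementary tail computations for the minima, and then to derive the passage-time statement from the pointwise coupling~\eqref{eq: domination} together with a known homogeneous moment bound.

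For the lower bound $\max\{\E Y_-^\alpha,\E Y_+^\alpha\}\le \E \Yhi^\alpha$, I observe that $\Fhi=\min\{F_-,F_+\}\le F_\pm$ pointwise, which is precisely the statement that $\Fhi^{-1}(\xi)$ stochastically dominates both $F_-^{-1}(\xi)$ and $F_+^{-1}(\xi)$ when $\xi$ is uniform on $[0,1]$. Stochastic dominance is preserved under taking minima of i.i.d.\ copies, so $\Yhi$ stochastically dominates $Y_\pm$, and since $x\mapsto x^\alpha$ is non-decreasing on $[0,\infty)$, integrating yields $\E \Yhi^\alpha\ge \max\{\E Y_-^\alpha,\E Y_+^\alpha\}$.

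For the upper bound I would compute tails directly. From $1-\Fhi=\max\{1-F_-,1-F_+\}$ and monotonicity of $x\mapsto x^4$ on $[0,\infty)$,
\[
\Pr(\Yhi>x) \;=\; \bigl(1-\Fhi(x)\bigr)^4 \;=\; \max\bigl\{(1-F_-(x))^4,(1-F_+(x))^4\bigr\} \;=\; \max\{\Pr(Y_->x),\Pr(Y_+>x)\}.
\]
The elementary bound $\max\{a,b\}\le a+b$ together with the layer-cake formula $\E Y^\alpha=\alpha\int_0^\infty x^{\alpha-1}\Pr(Y>x)\,dx$ then yields $\E\Yhi^\alpha\le \E Y_-^\alpha+\E Y_+^\alpha$, which implies the stated bound with substantial slack in the constant $8$.

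For the final claim, I would use the coupling \eqref{eq: domination} to pass to passage times, obtaining $T(0,z)\le T_{\Fhi}(0,z)$ pointwise. It therefore suffices to show that, in a homogeneous environment with distribution $F$ satisfying $\E Y_F^\alpha<\infty$, one has $\E T_F(0,z)^\alpha<\infty$ for every $z\in\R^2$. This is a standard input in homogeneous first-passage percolation (essentially due to Cox and Durrett~\cite{coxdur81}, and recalled in Section~2.2 of~\cite{A13}): one bounds $T_F(0,z)$ by a sum of a controlled number of independent $Y_F$-type variables coming from the minimum-weight edges near each vertex of a straight lattice path from $0$ to $z$. Combined with the already-established $\E\Yhi^\alpha<\infty$, this gives $\E T(0,z)^\alpha<\infty$. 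The main obstacle is precisely this last invocation; the transfer from moments of the four-edge minimum $Y_F$ to moments of the passage time is the only non-trivial input, while everything else is direct manipulation of distribution functions.
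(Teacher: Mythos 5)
Your proposal is correct and essentially mirrors the paper's proof: the lower bound comes from $\Yhi$ dominating $Y_\pm$ under the coupling, the upper bound from a tail estimate plus the layer-cake formula (your exact identity $\Pr(\Yhi>x)=\max\{\Pr(Y_->x),\Pr(Y_+>x)\}$ even yields constant $1$ where the paper's $(a+b)^4\le 8(a^4+b^4)$ step yields $8$), and the passage-time moment from the domination $T(0,z)\le T_{\Fhi}(0,z)$ together with the Cox--Durrett Lemma~3.1 bound, which is exactly the argument the paper invokes. One small caveat: that cited bound is obtained by taking the minimum of the passage times of four edge-disjoint paths from $0$ to $z$, not by summing minimum-weight edges near the vertices of a single path as your parenthetical gloss suggests; since you only use the (correctly stated) conclusion, this does not affect the validity of your argument.
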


\begin{proof}
The first inequality holds by $\max\{Y_-,Y_+\} \leq Y_{\textup{dom}}$. The next is from
 \bea
 \begin{aligned}
 \mathbb{P}(Y_\textup{dom} \geq x)\; =\; \Pr\big(F_{\textup{dom}}^{-1}(\xi) >x\big)^4\;&\le\;\big(\Pr(F_-^{-1}(\xi)>x)+\Pr(F_+^{-1}(\xi)>x)\big)^4\\
 &\le\; 8\big(\Pr(F_-^{-1}(\xi)>x)^4+\Pr(F_+^{-1}(\xi)>x)^4\big) \\
 &=\;8 \big( \mathbb{P}(Y_- \geq x ) + \mathbb{P}(Y_+ \geq x) \big)\ .
 \end{aligned}
 \eea
 The proof of the last statement consists of constructing 4 edge-disjoint paths from 0 to $x$ and arguing as in \cite[Lemma~3.1]{coxdur81}.
\end{proof}

The last result we mention here will be used to bound tail probabilities while deriving both radial convergence and the shape theorem (Theorems~\ref{thm: radial_convergence} and~\ref{thm: shape_theorem}). Here and below we let $T_+$ denote the passage time over paths restricted to $\Hb_+$ and not including edges along the $\ebf_2$-axis.

\begin{prop}\label{prop: homogeneousLDE}
For every $\eps>0$ there exist $M=M(\eps)<\infty$ and $\gamma=\gamma(\eps)>0$ such that for every $x\in\R^2$ and $t\ge|x|$
\begin{equation}\label{eq: LDhom}
\Pr\big(T_F(0,x)-\mu_F(x)<-\eps t\big)\,\le\, M e^{-\gamma t}\ .
\end{equation}
Moreover, if $\E Y_+<\infty$, $x\in\Hb_+$ and $q\ge1$, then $M=M(\eps,q)$ can be chosen so that
\begin{equation}\label{eq: LD}
\Pr\big(|T_+(2\mathbf{e}_1,2\mathbf{e}_1+x)-\mu_+(x)|>\eps t\big)\,\le\, M\,\Pr(Y_+>t/M)+\frac{M}{t^q}\ .
\end{equation}
\end{prop}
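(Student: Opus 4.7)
The two bounds are classical-type estimates for homogeneous first-passage percolation, and my plan is to treat them separately.

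For the exponential lower tail~(\ref{eq: LDhom}): if $F(0)\ge p_c$ then by~(\ref{eq: kesten_condition}) $\mu_F\equiv 0$, and since passage times are non-negative the event in question has probability $0$, so there is nothing to prove. Assume $F(0)<p_c$. The strategy is to combine a short-geodesic estimate with a standard concentration bound. First I would pick $\alpha>0$ with $F(\alpha)<p_c$; a Chernoff bound applied to any self-avoiding path of length $n$ from the origin, together with a union bound over the at most $4\cdot 3^{n-1}$ such paths, shows that outside an event of exponentially small probability every such path has passage time at least $cn$. In particular, off that event, any geodesic from $0$ to $x$ has length at most $C|x|$, so $T_F(0,x)$ is determined by the weights inside a box of side $C|x|$. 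Truncating these weights at a large deterministic threshold and applying Azuma--Hoeffding along paths of length $\lesssim|x|$ (so that the martingale variance scale is $O(|x|)$) yields exponential concentration of $T_F(0,x)$ around its mean. The mean in turn equals $\mu_F(x)+o(|x|)$, and the hypothesis $t\ge|x|$ converts this into the asserted bound $M e^{-\gamma t}$.

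For the polynomial bound~(\ref{eq: LD}): first establish the existence of the half-plane time constant $\mu_+$. The family $k\mapsto T_+(2\ebf_1+k\ebf_2,\,2\ebf_1+(k+n)\ebf_2)$ is subadditive and stationary under vertical translations on $\Hb_+$, and $\E Y_+<\infty$ combined with Lemma~\ref{lem: cd_moment_bound} gives integrability; Kingman's subadditive ergodic theorem then supplies the limit in direction $\ebf_2$, and homogeneity together with the Lipschitz-type estimate~(\ref{eq: lipschitz}) extends it to every $x\in\Hb_+$. For the quantitative deviation bound itself I would follow the truncation scheme introduced in~\cite{A13-2}: cap each edge-weight at a threshold $M=M(t)$, apply the method of bounded differences to the resulting uniformly-bounded field (on a slab of width $\lesssim|x|$ around the segment from $2\ebf_1$ to $2\ebf_1+x$, outside which no geodesic lies except on an exponentially small event) to obtain super-polynomial concentration of the form $M/t^q$, and control the truncation error by the probability that any edge traversed by a geodesic has weight exceeding $M$; this last contribution is absorbed into the $M\,\Pr(Y_+>t/M)$ term after adjusting constants.

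The delicate point lies squarely in~(\ref{eq: LD}): under the single-moment hypothesis $\E Y_+<\infty$ one cannot access higher moments of the passage time directly, so essentially all of the work consists of balancing the truncation threshold $M$ against $t$ and $q$ so that the concentration-after-truncation error and the truncation correction collapse into the stated combined bound. I would import this quantitative bookkeeping directly from~\cite{A13-2} rather than reconstructing it, which also explains why the authors of the present paper state the bound in precisely this form.
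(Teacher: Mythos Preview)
For~\eqref{eq: LDhom} your outline is a workable alternative to the Grimmett--Kesten path-counting argument the paper cites (and does not reprove). One point to tighten: since truncation from above decreases both $T_F$ and $\mu_F$, you must first fix the truncation level $L$ so that $\sup_{|u|=1}(\mu_F(u)-\mu_F^L(u))<\eps/2$, and only then use $\E T_F^L(0,x)\ge\mu_F^L(x)$ to pass to concentration about the mean; otherwise the chain of inequalities does not close.

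For~\eqref{eq: LD} your description does not match what \cite{A13-2} (summarized in the paper's appendix) actually does, and the substitute you sketch has a real gap. The argument in \cite{A13-2} is \emph{regenerative}, not a truncation/bounded-differences scheme: one works in a cylinder $\Cyl(z,r)$, marks the random cross-sections where every edge weight is below a fixed threshold $\bar t$, and decomposes the cylinder passage time into a stopped sum of i.i.d.\ increments. Under $\E Y_+<\infty$ these increments acquire a finite \emph{second} moment once $r$ is large (many disjoint paths in a wide cylinder), so Chebyshev and Wald give a $1/t$ bound; aligning $q$ disjoint cylinders boosts this to $1/t^q$. The transfer to the half-plane is by surgery at the ends, and the term $M\,\Pr(Y_+>t/M)$ is exactly the cost of those short connecting pieces. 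Your bounded-differences route runs into trouble on the \emph{upper} tail: truncating from above only decreases $T_+$, so it gives no control on $\{T_+>\mu_+(x)+\eps t\}$, and with merely $\E Y_+<\infty$ there is no way to balance a growing truncation level against the Azuma constant to recover the stated form. You also do not explain why the half-plane limit coincides with the full-plane $\mu_+$; in the paper this is the cylinder comparison of Step~1.
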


The former statement in Proposition~\ref{prop: homogeneousLDE} was obtained in~\cite[(1.4)]{grikes84} for $x=n\ebf_1$, and extended to general directions in~\cite[Theorem~3]{A13}. The latter statement is proven in~\cite[Theorem~9]{A13-2} (see~\cite[Theorem~4]{A13} for the corresponding statement for the whole lattice). We remark that the moment condition in the latter part can be replaced by the condition that $\E Y_+^\alpha<\infty$ for some $\alpha>0$, but we will not need to use that.
We further remark that~\eqref{eq: LDhom} and~\eqref{eq: LD} imply that $\sum_{n\ge1}\Pr\big(|T_F(0,nx)-\mu_F(x)|>\eps n\big)<\infty$ for every $\eps>0$, under the condition $\E Y_F<\infty$. This fact has certain relevance for the results of this paper (in particular Theorem~\ref{thm: radial_convergence}) to be obtained under minimal assumptions.

\section{Radial convergence -- Proof of Theorem~\ref{thm: radial_convergence}}

In this section we prove Theorem~\ref{thm: radial_convergence}, and assume throughout that $\E Y_-$ and $\E Y_+$ are finite. By Lemma~\ref{lem: cd_moment_bound}, $\E T(m\mathbf{e}_2,n\mathbf{e}_2) < \infty$ for all $m,n \in \mathbb{Z}$.

\begin{claim}\label{claim: axis}
There exists a constant $\nu\in[0,\infty)$ such that for every $\lambda\in\R$
\begin{equation*}
\bar{\mu}(\lambda \mathbf{e}_2)\,:=\, \lim_{n\to \infty} \frac{T(0,n\lambda \mathbf{e}_2)}{n} \,=\, |\lambda| \nu \quad\text{almost surely and in }L^1\ .
\end{equation*}
\end{claim}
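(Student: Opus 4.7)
The plan is to deduce Claim~\ref{claim: axis} from Kingman's subadditive ergodic theorem, exploiting the fact that although the inhomogeneous model lacks horizontal translation invariance, it \emph{is} invariant under vertical translations $e \mapsto e + \mathbf{e}_2$ and under reflection across the $\mathbf{e}_1$-axis. First I would set $X_{m,n} := T(m\mathbf{e}_2, n\mathbf{e}_2)$ for integers $0 \le m \le n$ and verify the hypotheses of Kingman's theorem: subadditivity $X_{0,n+k} \le X_{0,n} + X_{n,n+k}$ follows from concatenating paths; integrability $\mathbb{E} X_{0,n} < \infty$ is provided by Lemma~\ref{lem: cd_moment_bound}; and vertical shift invariance of the environment yields the required distributional stationarity of $\{X_{m,n}\}$. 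Kingman's theorem then produces a nonnegative limit $\nu$ with $T(0, n\mathbf{e}_2)/n \to \nu$ almost surely and in $L^1$. Since the vertical shift acts ergodically on the product probability space $[0,1]^{\mathcal{E}}$ underlying the coupling \eqref{eq: domination}, $\nu$ is almost surely constant.

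For negative $\lambda$ I would run Kingman a second time on the reversed family $\widetilde X_{m,n} := T(-n\mathbf{e}_2, -m\mathbf{e}_2)$ for $0 \le m \le n$, which is again subadditive, integrable, and stationary under vertical shifts; this yields $T(-n\mathbf{e}_2, 0)/n \to \widetilde\nu$ a.s.\ and in $L^1$ for some a.s.-constant $\widetilde\nu \ge 0$. Vertical translation invariance gives $T(-n\mathbf{e}_2, 0) \stackrel{d}{=} T(0, n\mathbf{e}_2)$, so the two deterministic constants must coincide, $\widetilde\nu = \nu$. (Equivalently one could invoke reflection symmetry across the $\mathbf{e}_1$-axis.) To pass from integer distances to general $\lambda \in \R$, I would use the $[0,1)^2$-convention extending $T$ to $\R^2$: for $\lambda \neq 0$ set $k_n := \lfloor n|\lambda| \rfloor$ so that $T(0, n\lambda\mathbf{e}_2) = T(0, \sgn(\lambda) k_n \mathbf{e}_2)$, giving
\bea
\frac{T(0, n\lambda\mathbf{e}_2)}{n} \,=\, \frac{k_n}{n}\cdot\frac{T(0, \sgn(\lambda)\, k_n \mathbf{e}_2)}{k_n}\ .
\eea
The first factor converges deterministically to $|\lambda|$ and the second to $\nu$ a.s.\ and in $L^1$ by the integer case, so the product converges to $|\lambda|\nu$ almost surely; $L^1$-convergence then follows by a triangle inequality that absorbs the uniformly $L^1$-bounded factor $T(0, k_n\mathbf{e}_2)/k_n$.

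The main obstacle is a soft one: confirming that vertical stationarity alone — without the usual two-dimensional translation invariance used in homogeneous first-passage percolation — is enough both to apply Kingman's theorem and to force the limit to be deterministic. Once that structural observation is in place, no tail estimate is required at this stage; integrability via Lemma~\ref{lem: cd_moment_bound} is the only probabilistic input, and Proposition~\ref{prop: homogeneousLDE} is saved for the harder radial convergence in non-axial directions that follows.
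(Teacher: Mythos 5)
Your proposal is correct and follows essentially the same route as the paper: a standard application of Kingman's subadditive ergodic theorem along the $\mathbf{e}_2$-axis using vertical stationarity and ergodicity (integrability coming from Lemma~\ref{lem: cd_moment_bound}), the identification of the two directions by symmetry, and then scaling to general $\lambda$ via $\lfloor n|\lambda|\rfloor$. The only cosmetic difference is that you dispose of the non-integer endpoint through the $[0,1)^2$-identification (note that for $\lambda<0$ the relevant lattice point is $\lfloor n\lambda\rfloor\mathbf{e}_2$ rather than $-\lfloor n|\lambda|\rfloor\mathbf{e}_2$, an off-by-one that does not affect the limit), whereas the paper controls the difference $T(0,n\lambda\mathbf{e}_2)-T(0,\lfloor n\lambda\rfloor\mathbf{e}_2)$ by a Borel--Cantelli estimate; both are valid.
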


\begin{proof}
Due to translation ergodicity of the environment along the $\ebf_2$-axis, a standard application of the Subadditive Ergodic Theorem shows
\begin{equation*}
 \infty > \nu := \lim_{n \to \infty} \frac{T(0,\pm n\mathbf{e}_2)}{n}  \quad\text{ exists almost surely and in }L^1\ .
\end{equation*}
This proves the claim for $\lambda=\pm1$, for which the limits coincide due to symmetry. More generally, for $\lambda >0$ and $n \in \mathbb{N}$,
\[
\frac{1}{n} T(0,n\lambda \mathbf{e}_2) \,=\, \frac{1}{n} \big( T(0,n\lambda \mathbf{e}_2) - T(0,\lfloor n\lambda \rfloor \mathbf{e}_2) \big) + \frac{\lfloor n\lambda \rfloor}{n} \frac{1}{\lfloor n\lambda \rfloor} \,T(0,\lfloor n\lambda \rfloor \mathbf{e}_2)\ .
\]
The second term converges almost surely and in $L^1$ to $\lambda\nu$. The other term converges to zero almost surely via Borel-Cantelli: for each $\eps>0$,
\begin{equation}\label{eq: mean_trick}
\sum_{n=1}^\infty \mathbb{P}\big(|T(0,n\mathbf{e}_2) - T(0,(n-1)\mathbf{e}_2)| > \eps n\big) \,\leq\, \sum_{n=1}^\infty \mathbb{P}\big(T(0,\mathbf{e}_2) > \eps n\big) \,\leq\, \frac{1}{\eps}\, \E T(0,\mathbf{e}_2)\ ,
\end{equation}
which is finite. This proves almost sure convergence for $\lambda>0$. $L^1$ convergence follows since $\E\big|T(0,n\lambda \mathbf{e}_2) - T(0,\lfloor n\lambda \rfloor \mathbf{e}_2)\big|$ is bounded. The case of $\lambda < 0$ is similar.
\end{proof}

A key observation used in the rest of the proof of Theorem~\ref{thm: radial_convergence} is that
\begin{equation}\label{eq: variation}
T(0,z)=\inf_{k\in\Z}\big[T(0,k\ebf_2)+T_+(k\ebf_2,z)\big]\quad\text{for all }z\in\Hb_+\ .
\end{equation}
(Recall that $T_+$ is the passage time among paths using edges with at least one endpoint in the interior of $\Hb_+$.) To prove this, let $\eps>0$ and choose a path $\Gamma$ from $0$ to $z$ such that $T(\Gamma)$ (the sum of edge-weights for edges in $\Gamma$) is no bigger than $T(0,z) + \eps$. $\Gamma$ has a terminal segment $\Gamma_t$ contained in the open right half-plane (except its initial and possibly its final vertex) from some $k\mathbf{e}_2$ to $z$. Write $\Gamma_i$ for the initial segment of $\Gamma$ up to $k\ebf_2$. Then
\[
T(0,k\mathbf{e}_2) + T_+(k\mathbf{e}_2,z) \;\leq\; T(\Gamma_i) + T_+(\Gamma_t) \;=\; T(\Gamma) \;\leq\; T(0,z) + \eps\ .
\]
Taking infimum over $k$ and sending $\eps\to0$ gives one inequality of \eqref{eq: variation}. For the other, let $k \in \mathbb{Z}$ and choose paths $\Gamma_1$ from 0 to $k\mathbf{e}_2$ and $\Gamma_2$, contained in the interior of $\Hb_+$ except its initial and possibly its final vertex, from $k \mathbf{e}_2$ to $z$ such that $T(\Gamma_1) \leq T(0,k\mathbf{e}_2) + \eps/2$ and $T_+(\Gamma_2) \leq T_+(k\mathbf{e}_2,z) + \eps/2$. The concatenation of $\Gamma_1$ and $\Gamma_2$, written $\Gamma$, is a path from $0$ to $z$, so
\[
T(0,z) \;\leq\; T(\Gamma) \;=\; T(\Gamma_1) + T_+(\Gamma_2) \;\leq\; T(0,k\mathbf{e}_2) + T_+(k\mathbf{e}_2,z) + \eps\ .
\]
This is true for all $k$ and $\eps>0$, so it proves the other inequality.

Returning to the proof of radial convergence, for $x=(x_1,x_2) \in \mathbb{R}^2$ with $x_1>0$ we set
\begin{equation}\label{eq: nu_def}
\nu(x) := \inf_{a \in \mathbb{R}} ~\big[\bar{\mu}(a \mathbf{e}_2) + \mu_+(x-a\mathbf{e}_2)\big]\ ,
\end{equation}
where $\bar{\mu}(a \mathbf{e}_2)$ is defined via Claim~\ref{claim: axis}.

\begin{claim}\label{claim: limsup}
For every $x=(x_1,x_2) \in \mathbb{R}^2$ with $x_1>0$,
\begin{equation*}
\limsup_{n \to \infty} \frac{T(0,nx)}{n} \leq \nu(x) \quad\text{almost surely}\ .
\end{equation*}
\end{claim}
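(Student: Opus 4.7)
The plan is to upper bound $T(0, nx)$ by a concrete admissible path whose cost is controlled by Claim~\ref{claim: axis}, a short-range moment bound, and Proposition~\ref{prop: homogeneousLDE}.

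Fix $\eps, \eps' > 0$. By the definition of $\nu(x)$, choose $a^* \in \R$ with
\[
\bar \mu(a^* \ebf_2) + \mu_+(x - a^* \ebf_2) < \nu(x) + \eps.
\]
Set $k_n := \lfloor a^* n \rfloor$, $v_n := k_n \ebf_2 + 2 \ebf_1 \in \Hb_+$, and $y_n := nx - k_n \ebf_2$, noting $|y_n| \le C n$ for a constant $C = C(x,a^*)$. Concatenating any path from $0$ to $v_n$ with any $T_+$-admissible path from $v_n$ to $nx$ yields a path from $0$ to $nx$, so
\[
T(0, nx) \;\le\; T(0, k_n \ebf_2) + T(k_n \ebf_2, v_n) + T_+(v_n, nx).
\]

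I would handle the three terms separately. First, Claim~\ref{claim: axis} gives $n^{-1} T(0, k_n \ebf_2) \to \bar \mu(a^* \ebf_2)$ almost surely. Second, the vertices $k_n \ebf_2$ and $v_n$ each have four neighbours in $\Z^2$, so the Cox--Durrett construction of four edge-disjoint paths (of bounded length $L$) combined with the coupling $\tau_e \le \Fhi^{-1}(\xi_e)$ of \eqref{eq: domination} yields a tail bound of the form $\Pr(T(k_n \ebf_2, v_n) > t) \le L^4 \Pr(\Yhi > t/L)$. Since $\E \Yhi < \infty$ by Lemma~\ref{lem: cd_moment_bound}, $\sum_n \Pr(T(k_n \ebf_2, v_n) > \eps' n) < \infty$, and Borel--Cantelli gives $n^{-1} T(k_n \ebf_2, v_n) \to 0$ almost surely. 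Third, by translation invariance of the $\Hb_+$-environment along $\ebf_2$, $T_+(v_n, nx)$ has the same law as $T_+(2\ebf_1, 2\ebf_1 + y_n)$, and Proposition~\ref{prop: homogeneousLDE} applied with $t = C n$ and $q = 2$ yields probabilities summable in $n$ in view of $\E Y_+ < \infty$. Borel--Cantelli, together with the Lipschitz homogeneity of $\mu_+$ (which gives $n^{-1} \mu_+(y_n) \to \mu_+(x - a^* \ebf_2)$), then produces $\limsup_n n^{-1} T_+(v_n, nx) \le \mu_+(x - a^* \ebf_2) + \eps' C$ almost surely. Summing the three bounds yields $\limsup_n n^{-1} T(0, nx) < \nu(x) + \eps + \eps' C$ almost surely, and letting $\eps, \eps' \to 0$ along countable sequences completes the proof.

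The main obstacle I anticipate is controlling $n^{-1} T(k_n \ebf_2, v_n)$: because only $\E Y_\pm < \infty$ is assumed (and not $\E \tau_\pm < \infty$), one cannot simply bound this two-edge passage time by the sum of two edge weights. The Cox--Durrett four-path comparison resolves this by exploiting the full four-neighbour structure at the endpoints, which is available in the inhomogeneous $T$; this is also why the path is routed through the off-axis vertex $v_n = k_n \ebf_2 + 2 \ebf_1$ rather than via a direct decomposition into $T_+(k_n \ebf_2, nx)$, since from an axis vertex only one $T_+$-admissible edge is available, making the four-path bound unavailable within $T_+$.
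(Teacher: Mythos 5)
Your proposal is correct and follows essentially the same route as the paper: decompose $T(0,nx)$ by subadditivity into an axis piece handled by Claim~\ref{claim: axis}, a bounded-displacement piece that is negligible after dividing by $n$ (the paper gets this directly from $\E T(0,2\ebf_1)<\infty$ and the Borel--Cantelli computation of \eqref{eq: mean_trick}, which is equivalent to your four-path bound), and a half-plane piece controlled by the second part of Proposition~\ref{prop: homogeneousLDE} with $q=2$ plus Borel--Cantelli. The only (harmless) imprecision is that $T_+(v_n,nx)$ is equal in law to $T_+(2\ebf_1,y_n)$ rather than $T_+(2\ebf_1,2\ebf_1+y_n)$ -- an $O(1)$ offset the paper avoids by ending the half-plane segment at $nx+2\ebf_1$ and adding one more short piece -- which changes nothing in the limit.
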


\begin{proof}
Let $a\in \mathbb{R}$ and $\delta>0$. By subadditivity, an upper bound on $T(0,nx)$ is given by
\begin{equation}\label{eq: first_replace}
T(0,an \mathbf{e}_2) + T(an\mathbf{e}_2,an\mathbf{e}_2 + 2\mathbf{e}_1) + T_+(an\mathbf{e}_2 + 2\mathbf{e}_1,nx+2\mathbf{e}_1)+T(nx+2\mathbf{e}_1,nx)\ .
\end{equation}
Just as in \eqref{eq: mean_trick}, both $\frac{1}{n}T(an\mathbf{e}_2,an\mathbf{e}_2+2\mathbf{e}_1)$ and $\frac{1}{n}T(nx+2\mathbf{e}_1,nx)$ vanish almost surely as $n\to\infty$, so almost sure convergence of $\frac{1}{n}T(0,an\mathbf{e}_2)$ to $\bar{\mu}(a\mathbf{e}_2)$ reduces the proof of Claim~\ref{claim: limsup} to showing
\[
\limsup_{n \to \infty} \frac{T_+(an\mathbf{e}_2+2\mathbf{e}_1,nx+2\mathbf{e}_1)}{n} \leq \mu_+(x-a \mathbf{e}_2)\quad\text{almost surely}\ .
\]
To do this, we use the latter part of Proposition~\ref{prop: homogeneousLDE}. Let $y=x-a\ebf_2$, and take $\eps=\delta/|y|$ and let $q=2$ so that the proposition gives a constant $M=M(\eps)$ such that
\[
\mathbb{P}\big(T_+(2\ebf_1,2\ebf_1+ny) > n(\mu_+(y) + \delta)\big) \,\leq\, M\, \mathbb{P}(Y_+ \geq |y|n/M) + \frac{M}{|y|^2n^2}\ .
\]
Since $|y|=|x-a\ebf_2|\ge x_1>0$ and $Y_+$ has finite mean, the sum over $n\in\N$ converges. So, Borel-Cantelli shows that $\frac{1}{n}T_+(an\mathbf{e}_2+2\mathbf{e}_1,nx+2\mathbf{e}_1)>\mu_+(x-a \mathbf{e}_2)+\delta$ for at most finitely many $n$, almost surely and proves Claim~\ref{claim: limsup}.
\end{proof}

Before proving a lower bound matching Claim~\ref{claim: limsup}, we separate a consequence of Proposition~\ref{prop: homogeneousLDE} that we will use.

\begin{claim}\label{claim: deviation}
For every $\eps>0$ and $x=(x_1,x_2)\in\R^2$ with $x_1>0$,
\begin{equation*}
\mathbb{P}\Big(T_+(k\mathbf{e}_2,nx) < \mu_+(nx-k\mathbf{e}_2)-\eps|nx-k\ebf_2| \text{ for infinitely many }(n,k) \in \mathbb{N} \times \mathbb{Z}\Big) = 0\ .
\end{equation*}
\end{claim}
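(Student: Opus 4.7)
My plan is to prove the claim by a direct Borel--Cantelli argument, using the lower-tail large deviation bound for the homogeneous model from Proposition~\ref{prop: homogeneousLDE}. The first task is to reduce the event involving $T_+$ to one involving $T_{F_+}$, and the second is to show that the probabilities, once summed over all $(n,k)\in\N\times\Z$, still converge.

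For the reduction, I note that $T_+(u,v)$ is an infimum over a strict subclass of the paths used to define $T_{F_+}(u,v)$ (namely those with every edge in the interior of $\Hb_+$), while the edge-weights of both models can be coupled to agree on such edges via~\eqref{eq: domination}. Consequently $T_+(k\ebf_2,nx) \ge T_{F_+}(k\ebf_2,nx)$ almost surely, and hence
\[
\Pr\bigl(T_+(k\ebf_2,nx) < \mu_+(y) - \eps|y|\bigr) \,\le\, \Pr\bigl(T_{F_+}(0,y) < \mu_+(y) - \eps|y|\bigr),
\]
where $y := nx - k\ebf_2$ and the equality in distribution on the right uses translation invariance of the homogeneous model. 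Applying~\eqref{eq: LDhom} with $t=|y|\ge |y|$, there exist $M,\gamma>0$ depending only on $\eps$ such that this last probability is at most $M e^{-\gamma |y|}$.

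For the summation, I use the hypothesis $x_1>0$ to control $|y|$ from below. Writing $y=(nx_1,\,nx_2-k)$, the elementary bound $\sqrt{a^2+b^2}\ge (|a|+|b|)/\sqrt 2$ gives $|y|\ge (nx_1+|nx_2-k|)/\sqrt 2$, so
\[
\sum_{n=1}^\infty\sum_{k\in\Z} Me^{-\gamma|y|} \;\le\; M\sum_{n=1}^\infty e^{-\gamma nx_1/\sqrt 2}\sum_{k\in\Z} e^{-\gamma|nx_2-k|/\sqrt 2}.
\]
The inner sum is bounded uniformly in $n$ by a geometric series, and the outer sum converges because $x_1>0$ gives a positive exponential rate. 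Borel--Cantelli then yields the claim.

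I do not anticipate any real obstacle; the only points that need a little care are (i) verifying that $T_+\ge T_{F_+}$ under the coupling, which comes for free because $T_+$ is an infimum over fewer paths that carry identical $F_+$-weights, and (ii) extracting enough decay in the two-dimensional sum, which is taken care of by the splitting $|y|\gtrsim nx_1 + |nx_2-k|$ that converts the bound into a product of two summable series.
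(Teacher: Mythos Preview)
Your proof is correct and follows essentially the same route as the paper: translate, invoke the lower-tail exponential bound from Proposition~\ref{prop: homogeneousLDE}, and apply Borel--Cantelli. You are in fact slightly more explicit than the paper in justifying why the bound~\eqref{eq: LDhom}, stated for $T_F$, applies to $T_+$: your observation that $T_+\ge T_{F_+}$ under the coupling (fewer paths, identical $F_+$-weights) is exactly the missing link.

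The only genuine difference is in how the double sum is shown to converge. The paper argues that, because $x_1>0$, the points $nx-k\ebf_2$ visit each unit square $z+[0,1)^2$ at most a bounded number of times as $(n,k)$ ranges over $\N\times\Z$, and then compares to $\sum_{z\in\N\times\Z} e^{-\gamma|z|}$. Your coordinate splitting $|y|\ge (nx_1+|nx_2-k|)/\sqrt 2$ is a cleaner and more direct alternative that avoids the counting argument altogether; both yield the same conclusion.
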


\begin{proof}
Note that $T_+(k\ebf_2,nx)$ equals $T_+(0,nx-k\ebf_2)$ in distribution. Pick $\eps>0$ and let $M=M(\eps)$ and $\gamma=\gamma(\eps)$ be given as in the first part of Proposition~\ref{prop: homogeneousLDE}. Then,
$$
\Pr\big(T_+(0,nx-k\ebf_2) - \mu_+(nx-k\mathbf{e}_2)<-\eps|nx-k\ebf_2|\big)\,\le\,Me^{-\gamma|nx-k\ebf_2|}
$$
for all $n\in\N$ and $k\in\Z$. Since $x_1>0$, as $n$ and $k$ ranges over $\N$ and $\Z$, respectively, $nx-k\ebf_2$ will be in each unit square $z+[0,1)^2$ at most a finite number (say $K$) of times, for each $z\in\Z^2$. Replacing $nx-k\ebf_2$ by a corner of the unit square in which it is in, we find that
$$
\sum_{n\in\N,\,k\in\Z}\Pr\big(T_+(0,nx-k\ebf_2) - \mu_+(nx-k\mathbf{e}_2)<-\eps|nx-k\ebf_2|\big)\,\le\,\sum_{z\in\N\times\Z}2KMe^{-\gamma|z|}\ ,
$$
which is finite. The claim thus follows by Borel-Cantelli.
\end{proof}

\begin{claim}\label{claim: liminf}
For every $x=(x_1,x_2) \in \mathbb{R}^2$ with $x_1>0$,
\bea
\liminf_{n\to\infty}\frac{T(0,nx)}{n}\ge \nu(x)\quad\text{almost surely}\ .
\eea
\end{claim}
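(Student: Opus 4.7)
The plan is to apply the variational identity \eqref{eq: variation} and argue that for every $\eps>0$, almost surely for all large $n$ and every $k\in\Z$,
\[
T(0,k\ebf_2) + T_+(k\ebf_2, nx) \,\ge\, n\big(\nu(x)-\eps\big)\ ,
\]
from which the claim follows by taking the infimum over $k$. The underlying reason is that, writing $a=k/n$ and using positive homogeneity of $\mu_+$ and of $\bar\mu(\cdot\,\ebf_2)$ (which equals $|\cdot|\nu$ by Claim~\ref{claim: axis}), the two summands are approximately $n\bar\mu(a\ebf_2)$ and $n\mu_+(x-a\ebf_2)$, whose sum is at least $n\nu(x)$ by the definition of $\nu(x)$ in \eqref{eq: nu_def}.

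First, dispose of a degenerate case: if $F_+(0)\ge p_c$ then $\mu_+\equiv 0$ by \eqref{eq: kesten_condition} and hence $\nu(x)\le\bar\mu(\org)+\mu_+(x)=0$, so the claim is trivial. From here on assume $\mu_+$ is non-degenerate and fix $c>0$ with $\mu_+(y)\ge c|y|$ for all $y$.

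Next, localize the infimum to a bounded range $|k|\le Cn$. Pick a small $\eps'<c/2$. Claim~\ref{claim: deviation} furnishes a random $n_0<\infty$ such that, for every $n\ge n_0$ and every $k\in\Z$,
\[
T_+(k\ebf_2, nx) \,\ge\, \mu_+(nx-k\ebf_2) - \eps'|nx-k\ebf_2| \,\ge\, (c-\eps')|nx-k\ebf_2|\ .
\]
Choosing $C>|x_2|+\nu(x)/(c-\eps')$, for $|k|>Cn$ the right-hand side exceeds $(c-\eps')(C-|x_2|)n>\nu(x)n$, so such $k$ do not depress the infimum in \eqref{eq: variation} below $\nu(x)n$. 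For $|k|\le Cn$, use Claim~\ref{claim: axis} to pick a random $K_0$ with $T(0,k\ebf_2)\ge|k|(\nu-\eps'')$ for $|k|\ge K_0$ (the range $|k|<K_0$ contributes only an $O(1)$ error). Combining with the lower bound on $T_+$ and the identities $|k|\nu=n\bar\mu((k/n)\ebf_2)$, $\mu_+(nx-k\ebf_2)=n\mu_+(x-(k/n)\ebf_2)$, one obtains
\[
T(0,k\ebf_2)+T_+(k\ebf_2,nx) \,\ge\, n\big[\bar\mu((k/n)\ebf_2)+\mu_+(x-(k/n)\ebf_2)\big] - (\eps'+\eps'')(|x|+C)n - O(1)
\]
uniformly over $|k|\le Cn$. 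The bracket is at least $\nu(x)$ by \eqref{eq: nu_def}, so combining with the localization bound and dividing by $n$ yields $\liminf T(0,nx)/n\ge\nu(x)-(\eps'+\eps'')(|x|+C)$. Since $C$ is fixed once $\eps'<c/2$ is fixed, sending $\eps',\eps''\to 0$ completes the argument.

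The main obstacle is the unboundedness of $k$ in \eqref{eq: variation}: a naive $k$-by-$k$ argument via Borel--Cantelli would fail, so the proof leans on Claim~\ref{claim: deviation} producing a deviation bound that holds simultaneously for all $k$ once $n$ is large. A secondary subtlety is the order of limits: $\eps'$ must be chosen before $C$, which is then frozen while $\eps',\eps''\to 0$.
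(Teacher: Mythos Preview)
Your proof is correct and follows the same overall strategy as the paper: use the variational identity \eqref{eq: variation}, control $T_+(k\ebf_2,nx)$ uniformly in $k$ via Claim~\ref{claim: deviation}, and control $T(0,k\ebf_2)$ via Claim~\ref{claim: axis}. The one genuine difference is in how you localize the infimum over $k$. The paper splits into the cases $\bar\mu(\ebf_2)=0$ and $\bar\mu(\ebf_2)>0$; in the latter it uses the axis term $T(0,k\ebf_2)\gtrsim |k|\bar\mu(\ebf_2)$ to discard $|k|>bn$, while the former case is handled separately by observing $\nu(x)=\mu_+(x_1\ebf_1)$. You instead localize through the half-plane term, using $T_+(k\ebf_2,nx)\ge(c-\eps')|nx-k\ebf_2|$ (available once $\mu_+\not\equiv0$) to discard $|k|>Cn$. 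This avoids the case split entirely, which is a mild simplification.

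One cosmetic point: your closing remark about the order of limits (``$\eps'$ must be chosen before $C$, which is then frozen while $\eps',\eps''\to0$'') reads oddly, since $C$ was defined depending on $\eps'$. What actually makes the argument work is that $C$ can be chosen uniformly for all $\eps'<c/2$ (for instance $C=|x_2|+2\nu(x)/c$), so $(\eps'+\eps'')(|x|+C)\to0$ cleanly. You might rephrase to make this explicit.
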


\begin{proof}
The proof will proceed through a few different cases. If $\nu(x)=0$ there is nothing to prove, so we may assume that $\nu(x)>0$. This guarantees that $\mu_+\not\equiv0$, because otherwise we could take $a=0$ in the definition of $\nu$; however, $\bar\mu(\ebf_2)$ may still be zero. First assume that $\mu_+ \not\equiv0$ and $\bar \mu(\ebf_2)=0$. Then for every $a \in \mathbb{R}$, symmetry and convexity of $\mu_+$ gives
\begin{equation*}
\mu_+(x-a\mathbf{e}_2) \;=\; \tfrac{1}{2}\big[\mu_+(x_1\mathbf{e}_1 + (x_2-a)\mathbf{e}_2) + \mu_+(x_1\mathbf{e}_1 - (x_2-a)\mathbf{e}_2)\big] \;\geq\; \mu_+(x_1\mathbf{e}_1)\ .
\end{equation*}
In particular $\nu(x)=\mu_+(x_1\ebf_1)>0$ and $\mu_+(nx_1\ebf_1)\le\mu_+(nx-k\ebf_2)$, so by \eqref{eq: variation}, for any $\delta>0$, we have
\begin{align*}
&\mathbb{P}\Big(T(0,nx) < (1-\delta)n\nu(x) \text{ for infinitely many }n\Big) \\
&\quad\leq\; \mathbb{P}\Big(\inf_{k \in \mathbb{Z}} T_+(k\mathbf{e}_2,nx) < (1-\delta)\mu_+(nx_1\ebf_1)\text{ for infinitely many }n\Big) \\
&\quad\leq\; \mathbb{P}\Big(T_+(k\mathbf{e}_2,nx) < (1-\delta)\mu_+(nx-k\mathbf{e}_2) \text{ for infinitely many }(n,k) \in \mathbb{N} \times \mathbb{Z}\Big)\ .
\end{align*}
Since $\mu_+(nx-k\ebf_2)\ge\inf\{\mu_+(y):|y|=1\}|nx-k\ebf_2|$, we may apply Claim~\ref{claim: deviation} (with $\eps=\delta\cdot\inf\{\mu_+(y):|y|=1\}>0$) to find that the above probability equals zero, thus proving Claim~\ref{claim: liminf} when $\mu_+\not\equiv0$ but $\bar{\mu}(\mathbf{e}_2) = 0$.

To complete the proof of Claim~\ref{claim: liminf}, it remains to verify the case $\mu_+ \not\equiv0$ and $\bar\mu(\mathbf{e}_2)>0$. We will use \eqref{eq: variation} and we first bound its right side for large $k$. So, fix $b>0$ such that
\[
b\cdot\bar\mu(\ebf_2) > \nu(x)\ .
\]
By convergence along the $\mathbf{e}_2$-axis, almost surely, for all large $n \in \mathbb{N}$, and $k \in \mathbb{Z}$ for which $|k|>bn$
\[
\frac{T(0,k\mathbf{e}_2)}{n} \;>\; b\, \frac{T(0,k\mathbf{e}_2)}{|k|} \;\geq\; \nu(x)\ ,
\]
giving
\be\label{eq:lower1}
\Pr\left(\inf_{|k|> bn}\frac{T(0,k\ebf_2)}{n}\ge\nu(x)\text{ for all large }n\right)=1\ .
\ee

Next we show that given $\eps>0$, almost surely there exists $N \in \mathbb{N}$ such that for all $n\ge N$
\begin{equation}\label{eq: condition_1}
T(0,k\mathbf{e}_2) \,\geq\, \bar{\mu}(k\mathbf{e}_2) - \eps n/2 \quad\text{for all } k \in \mathbb{Z} \text{ with } |k| \leq bn
\end{equation}
and
\begin{equation}\label{eq: condition_2}
T_+(k\mathbf{e}_2,nx) \,\geq\, \mu_+(nx-k\mathbf{e}_2) - \eps n/2 \quad\text{for all } k \in \mathbb{Z} \text{ with } |k| \leq bn\ .
\end{equation}
Combining~\eqref{eq: condition_1} and~\eqref{eq: condition_2}, we will then have for $n \geq N$,
\[
T(0,k\mathbf{e}_2) + T_+(k\mathbf{e}_2,nx) \,\geq\, \bar{\mu}(k\mathbf{e}_2) + \mu_+(nx-k\mathbf{e}_2) - \eps n \quad\text{for } |k| \leq bn\ .
\]
By the definition of $\nu$ as an infimum,
\[
T(0,k\mathbf{e}_2) + T_+(k\mathbf{e}_2,nx) \,\geq\, \nu(nx) - \eps n \quad\text{for } n \geq N \text{ and } |k| \leq bn\ .
\]
On the other hand, \eqref{eq:lower1} gives $N' \geq N$ such that if $n \geq N'$ then 
\[
T(0,k\mathbf{e}_2) + T_+(k\mathbf{e}_2,nx) \,\geq\, T(0,k\mathbf{e}_2) \,\geq \,\nu(nx) - \eps n \quad\text{for } |k| > bn\ .
\]
By \eqref{eq: variation}, we would have $T(0,nx) \geq \nu(nx) - \eps n$ for $n \geq N'$, completing the proof of Claim~\ref{claim: liminf}.

So, it remains to prove \eqref{eq: condition_1} and \eqref{eq: condition_2}. Fix $\delta$ with
\begin{equation}\label{eq: delta_def}
0\,<\, \delta \,<\, \frac{\eps}{2 (|x|+b)}\ .
\end{equation}
For \eqref{eq: condition_1}, we use convergence along the $\mathbf{e}_2$-axis to, with probability one, find $K$ such that
\[
T(0,k\mathbf{e}_2) \,\geq\, \bar{\mu}(k\mathbf{e}_2) - |k| \delta \quad\text{whenever } |k| \geq K\ .
\]
For $|k| \leq bn$, the choice of $\delta$ ensures that $|k| \delta < \eps n/2$, so $T(0,k\mathbf{e}_2) \geq \bar{\mu}(k\mathbf{e}_2) - \eps n/2$ whenever $K \leq |k| \leq bn$. For $|k| < K$, simply choose $N_1 \geq 2K\bar{\mu}(\mathbf{e}_2)/\eps$. Then if $n \geq N_1$, we have $\bar{\mu}(k\mathbf{e}_2) \leq \eps n /2$, so~\eqref{eq: condition_1} holds for all $n\ge N_1$.

For \eqref{eq: condition_2}, use Claim~\ref{claim: deviation} to find $N_2 \geq N_1$ such that if $n \geq N_2$ then
\[
T_+(k\mathbf{e}_2,nx) \,\geq\, \mu_+(nx-k\mathbf{e}_2)-\delta|nx-k\ebf_2| \quad\text{for all } k \in \mathbb{Z}\ .
\]
Again, for $|k| \leq bn$, the choice of $\delta$ in \eqref{eq: delta_def} gives 
\[
\delta|nx-k\mathbf{e}_2| \;\leq\; \delta n(|x| + |k|/n) \;\leq\; \delta n(|x|+b) \;<\; \eps n/2\ ,
\]
and so~\eqref{eq: condition_2} holds for $n \geq N_2$, and the proof of Claim~\ref{claim: liminf} is complete.
\end{proof}

We have now shown almost sure convergence of $\frac{1}{n}T(0,nx)$ when $x_1 \geq 0$. The case $x_1<0$ is similar, and the only modifications necessary are to replace $T_+$ with $T_-$, the passage time for paths using only edges with at least one endpoint in the interior of $\Hb_-$, as well as equation \eqref{eq: variation} with its obvious analogue and $\mu_+$ with $\mu_-$.

To complete the proof of Theorem~\ref{thm: radial_convergence}, we must prove $L^1$-convergence. (This was already remarked when $x_1=0$ in Claim~\ref{claim: axis}.) We use dominated convergence, bounding $T(0,nx)$ above by $T_{\text{dom}}(0,nx)$, where $T_{\text{dom}}$ is  the passage time in the homogeneous environment with edge-weights distributed as $F_{\text{dom}}$. By Lemma~\ref{lem: cd_moment_bound}, the assumption $\max\{\E Y_-,\E Y_+\} < \infty$ implies that $\E\Yhi < \infty$, where $\Yhi$ is as in Lemma~\ref{lem: cd_moment_bound}. Therefore $L^1$-convergence in homogeneous environments (mentioned below \eqref{eq:radialhom}) completes the proof of Theorem~\ref{thm: radial_convergence}.

\begin{remark}\label{rem: three_f}
The reader may verify that the above proof goes through, essentially word for word, if edges in the left and right half-planes are assigned weights according to $F_-$ and $F_+$, respectively, but edges on the vertical axis are assigned weights according to a distribution $F_0$. In this case, one may require, for example, $\E Y_-, \E Y_+$ and $\E Y_0$ to be finite.
\end{remark}

\section{The time constant and the asymptotic shape}\label{sec: time_constant}

We aim in this section to prove Theorem~\ref{thm: mu_bar_properties}. Assume then that $\E Y_-$ and $\E Y_+$ are finite, so that the limit in Theorem~\ref{thm: radial_convergence} exists. We begin with a simple observation.

\begin{lma}\label{lem: mubar_bound}
The time constant satisfies
\begin{equation*}
\bar\mu(x)\le\left\{
\begin{aligned}
&\mu_-(x)&\text{for }x\in\Hb_-\ ,\\
&\mu_+(x)&\text{for }x\in\Hb_+\ .
\end{aligned}
\right.
\end{equation*}
In particular, $\bar\mu(\ebf_2)\le\min\{\mu_-(\ebf_2),\mu_+(\ebf_2)\}$.
\end{lma}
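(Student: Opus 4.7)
The plan is to bound $T(0, nx)$ from above by a concatenation of a short detour off the $\ebf_2$-axis and a half-plane passage time whose distribution, thanks to the coupling, matches that of the homogeneous $F_+$ (resp.\ $F_-$) model. By the left-right reflection symmetry (which swaps the roles of $F_-$ and $F_+$) it suffices to prove $\bar{\mu}(x) \le \mu_+(x)$ for $x \in \Hb_+$; the $\Hb_-$ bound follows by the mirror argument, and the final claim is immediate on applying both inequalities at $\ebf_2 \in \Hb_- \cap \Hb_+$.

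For fixed $x \in \Hb_+$ with $x \neq 0$, I would start from the subadditivity bound
\[
T(0, nx) \,\le\, T(0, 2\ebf_1) \,+\, T_+(2\ebf_1,\, nx + 2\ebf_1) \,+\, T(nx + 2\ebf_1,\, nx).
\]
The merit of the horizontal shift by $2\ebf_1$ is that every edge used by a $T_+$-path has both endpoints in $\Hb_+$, so by the coupling~\eqref{eq: domination} the middle passage time has the same distribution as its homogeneous $F_+$ counterpart; moreover this shift is exactly what lets the argument cover the boundary direction $x = \ebf_2$ (where $x_1 = 0$).

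I would then verify that each of the three summands, divided by $n$, converges almost surely to the correct limit. The first term is a fixed random variable, so $T(0, 2\ebf_1)/n \to 0$ trivially. For the third, the coupling gives $T(nx + 2\ebf_1, nx) \le T_{\Fhi}(nx + 2\ebf_1, nx)$, whose distribution equals that of $T_{\Fhi}(0, 2\ebf_1)$ by translation invariance of the homogeneous model; Lemma~\ref{lem: cd_moment_bound} gives finite first moment, and the Fubini-type summation $\sum_n \Pr(Z > \eps n) \le \E Z/\eps$ used in~\eqref{eq: mean_trick} feeds into Borel--Cantelli to yield a.s.\ convergence to zero. The middle term is controlled by applying the half-plane large-deviation estimate~\eqref{eq: LD} with $t = n|x|$ and $q = 2$; since $\E Y_+ < \infty$, the right-hand side is summable in $n$, and Borel--Cantelli delivers $T_+(2\ebf_1, nx + 2\ebf_1)/n \to \mu_+(x)$ almost surely. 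Combining the three with $T(0, nx)/n \to \bar{\mu}(x)$ from Theorem~\ref{thm: radial_convergence} gives $\bar{\mu}(x) \le \mu_+(x)$.

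Nothing in this outline is really obstructive: the argument is essentially the decomposition used in Claim~\ref{claim: limsup} specialized to the shift parameter $a = 0$, but framed so it also handles $x$ on the $\ebf_2$-axis. The only point requiring attention is the coupling bookkeeping that turns $T_+$ (an inhomogeneous passage time) into a homogeneous $F_+$ passage time to which the large-deviation input applies.
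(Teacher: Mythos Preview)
Your proposal is correct and follows essentially the same route as the paper: the same subadditive decomposition $T(0,nx)\le T(0,2\ebf_1)+T_+(2\ebf_1,2\ebf_1+nx)+T(2\ebf_1+nx,nx)$, with the middle term controlled by~\eqref{eq: LD} and the flanking terms by~\eqref{eq: mean_trick}. The only difference is cosmetic: the paper is content with convergence in probability of the right-hand side (sufficient since the left side converges a.s.), whereas you push each term to almost-sure convergence via Borel--Cantelli. One small remark: your explanation of the $2\ebf_1$ shift is slightly off---by the model's definition, $T_+$ already uses only $F_+$-weighted edges, so no ``coupling bookkeeping'' is needed to make it homogeneous; the shift is there simply because the half-plane estimate~\eqref{eq: LD} is stated for passage times based at $2\ebf_1$.
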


\begin{proof}
First note that for $x\in\Hb_+$ the subadditive property gives that
\be\label{eq: mubar_bound}
T(0,nx) \,\leq\, T(0,2\mathbf{e}_1) + T_+(2\mathbf{e}_1, 2\mathbf{e}_1 + nx) + T(2\mathbf{e}_1+ nx,nx)\ .
\ee
We now apply Proposition~\ref{prop: homogeneousLDE} to find that $\Pr\big(|T_+(2\mathbf{e}_1, 2\mathbf{e}_1 + nx)-\mu_+(nx)|>\eps n\big)\to0$ as $n\to\infty$.
Together with~\eqref{eq: mean_trick}, dividing by $n$ and sending $n\to\infty$ in~\eqref{eq: mubar_bound}, the right side converges in probability to $\mu_+(x)$. Since the left side, after division by $n$, converges to $\bar\mu(x)$, we find $\bar\mu(x)\le\mu_+(x)$. A similar argument shows $\bar\mu(x)\le\mu_-(x)$ when $x\in\Hb_-$, and the statement follows.
\end{proof}

We now begin the proof of Theorem~\ref{thm: mu_bar_properties}, starting with formula~\eqref{eq: mubar_formula}.

\begin{proof}[{\bf Proof of~(\ref{eq: mubar_formula})}]
By Claims~\ref{claim: limsup} and~\ref{claim: liminf} in the proof of Theorem~\ref{thm: radial_convergence}, formula \eqref{eq: mubar_formula} holds (with minimum replaced by infimum) when $x$ is not a multiple of $\mathbf{e}_2$. If $x=\lambda\ebf_2$ for some $\lambda\in\R$, then
\[
\bar{\mu}(\lambda \mathbf{e}_2) \;=\; \bar{\mu}(\lambda \mathbf{e}_2) + \mu_{\pm}(\lambda \mathbf{e}_2 - \lambda \mathbf{e}_2) \;\geq\; \inf_{a \in \mathbb{R}} ~[\bar{\mu}(a \mathbf{e}_2) + \mu_{\pm}(\lambda \mathbf{e}_2 - a\mathbf{e}_2)]\ .
\]
On the other hand, by Lemma~\ref{lem: mubar_bound} and the scaling $\bar\mu(\lambda\ebf_2)=|\lambda|\bar\mu(\ebf_2)$,
\[
\bar{\mu}(a\mathbf{e}_2) + \mu_{\pm}(\lambda \mathbf{e}_2-a\mathbf{e}_2) \;\geq\; |a| \bar{\mu}(\mathbf{e}_2) + |\lambda - a| \bar{\mu}(\mathbf{e}_2) \;\geq\; \bar{\mu}(\lambda \mathbf{e}_2) \quad\text{for all } a \in \mathbb{R}\ .
\]
We conclude that $\bar\mu(x)=\inf_{a\in\R}\big[\bar\mu(a\ebf_2)+\mu_\pm(x-a\ebf_2)\big]$ for all $x\in\Hb_\pm$.

Last we must show that the infimum is actually attained. So without loss in generality, let $x \in \Hb_+$ and consider the  continuous function $a \mapsto \bar{\mu}(a\mathbf{e}_2) + \mu_+(x-a\mathbf{e}_2)$. If $\bar{\mu}(\mathbf{e}_2) \neq 0$ or both $\mu_+ \not\equiv0$ and $x=(x_1,x_2)$ has $x_1>0$ then this function approaches $\infty$ as $|a|\to \infty$ and so has a minimum. Otherwise, the minimum (zero) is attained at any $a$.
\end{proof}

We next show that the function $\bar{\mu}$ retains some properties of a semi-norm. In particular, semi-norms are convex, which thus is the case for both $\mu_-$ and $\mu_+$.

\begin{prop}\label{prop: mubar_properties}
The time constant satisfies the following properties.
\begin{enumerate}[\quad a)]
\item $\bar \mu(\lambda x) = \lambda\,\bar \mu(x)$ for $\lambda\ge0$ and $x \in \mathbb{R}^2$.
\item $\bar\mu(x+y)\le\bar\mu(x)+\bar\mu(y)$ for $x,y\in\R^2$.
\item $|\bar \mu(x) - \bar{\mu}(y)| \le \max\{\mu_-(\ebf_2),\mu_+(\ebf_2)\}\|x-y\|_1$ for $x,y\in\R^2$.
\item $\bar \mu(\mathbf{e}_2) \neq 0\quad\Leftrightarrow\quad\bar\mu(x)\neq0\text{ for every }x\neq0\quad\Leftrightarrow\quad\max\{F_-(0),F_+(0)\} < p_c$.
\end{enumerate}
Moreover, for $x=\ebf_2$, part {a)} extends to $\bar\mu(\lambda\ebf_2)=|\lambda|\bar\mu(\ebf_2)$ for all $\lambda\in\R$.
\end{prop}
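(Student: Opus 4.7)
I would prove (a), (c), (d), (b) in that order, using formula~\eqref{eq: mubar_formula} throughout. Parts (a), (c), (d) are short: (a) follows from Theorem~\ref{thm: radial_convergence} via the floor-trick of Claim~\ref{claim: axis} (the extension $\bar\mu(\lambda\ebf_2)=|\lambda|\bar\mu(\ebf_2)$ for all $\lambda\in\R$ is Claim~\ref{claim: axis} itself); (c) is obtained by inserting the minimizer $a^*$ for $x$ into~\eqref{eq: mubar_formula} applied to $y$, using~\eqref{eq: lipschitz} and the $\Z^2$-reflection symmetry $\mu_+(\ebf_1)=\mu_+(\ebf_2)$ to get $|\bar\mu(x)-\bar\mu(y)|\le\mu_+(\ebf_2)\|x-y\|_1$ when $x,y$ share a half-plane, and triangulating through $(0,y_2)$ when they do not, so that $\max\{\mu_-(\ebf_2),\mu_+(\ebf_2)\}$ works as a uniform constant; (d) combines Kesten's criterion~\eqref{eq: kesten_condition} and the coupling~\eqref{eq: domination} (which gives $\bar\mu(\ebf_2)\ge\mu_{F_{\textup{dom}}}(\ebf_2)>0$ whenever $\max F_\pm(0)<p_c$) with~\eqref{eq: mubar_formula} to force $\bar\mu(x)\ne0$ for all $x\ne0$, the converse following from the standard percolation argument connecting $0$ and $n\ebf_2$ to an infinite zero-weight cluster (in $\Hb_+$ if $F_+(0)\ge p_c$, symmetrically for $F_-$) through finitely many bounded-expectation edges.

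Part (b) is the main obstacle. When $x,y\in\Hb_+$ with minimizers $a,b$ in~\eqref{eq: mubar_formula}, taking $c=a+b$ in~\eqref{eq: mubar_formula} for $x+y\in\Hb_+$ and combining $|a+b|\le|a|+|b|$ with subadditivity of $\mu_+$ yields
\[
\bar\mu(x+y)\;\le\;(|a|+|b|)\bar\mu(\ebf_2)+\mu_+(x-a\ebf_2)+\mu_+(y-b\ebf_2)\;=\;\bar\mu(x)+\bar\mu(y),
\]
and the case $x,y\in\Hb_-$ is symmetric. The truly delicate case is $x\in\Hb_+$, $y\in\Hb_-$ (with, say, $x+y\in\Hb_+$): the analogous choice $c=a+b$ would require the inequality $\mu_+(u+v)\le\mu_+(u)+\mu_-(v)$ for $u\in\Hb_+$, $v\in\Hb_-$, $u+v\in\Hb_+$, and this can fail (e.g., for deterministic weights $F_+\equiv10$, $F_-\equiv0.01$ with $u=\ebf_1$, $v=-\tfrac12\ebf_1+\ebf_2$, the LHS is $15$ and the RHS is $10.015$).

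To circumvent this I would first establish the ``axis-subadditivity''
\[
\bar\mu(z)\;\le\;\bar\mu(\alpha\ebf_2)+\bar\mu(z-\alpha\ebf_2)\quad\text{for every }\alpha\in\R,
\]
which follows from subadditivity of $T$, vertical translation invariance of the environment, and the $L^1$-convergence in Theorem~\ref{thm: radial_convergence}: take expectations of $T(0,nz)\le T(0,n\alpha\ebf_2)+T(n\alpha\ebf_2,nz)$ and use $T(n\alpha\ebf_2,nz)\stackrel{d}{=}T(0,n(z-\alpha\ebf_2))$ before passing to the limit. Setting $\alpha=b$, the axis-minimizer for $y$, reduces the opposite-half-plane subadditivity to the partial subadditivity $\bar\mu(x+v)\le\bar\mu(x)+\mu_-(v)$ with $v=y-b\ebf_2\in\Hb_-$ and $x+v\in\Hb_+$. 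Since $\mu_-(v)\ge\bar\mu(v)$ by Lemma~\ref{lem: mubar_bound}, this reduced inequality is genuinely weaker than full subadditivity. I expect to close it by comparing two candidate paths from $nx$ to $n(x+v)$: one remaining in $\Hb_+$ (with asymptotic cost $\mu_+(v)$, which is advantageous when $\mu_+\le\mu_-$) and one looping through the axis into $\Hb_-$ and back (yielding $\mu_-(v)$ plus an axis-crossing correction, advantageous when $\mu_-<\mu_+$); the coupling~\eqref{eq: domination} and Lemma~\ref{lem: mubar_bound} together should guarantee that one of the two bounds suffices in each regime. Executing this dichotomy uniformly in $(x,v)$ is the core technical step I expect to demand most of the work.
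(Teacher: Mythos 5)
The only genuine gap is in part \emph{b)}, in the mixed case $x\in\Hb_+$, $y\in\Hb_-$. Your same--half-plane argument is exactly the paper's, your counterexample to the naive choice $c=a+b$ is correct, and your reduction is valid: axis-subadditivity $\bar\mu(z)\le\bar\mu(\alpha\ebf_2)+\bar\mu(z-\alpha\ebf_2)$ (which does follow from subadditivity of $T$, vertical translation invariance and $L^1$-convergence) with $\alpha=b$ reduces the mixed case to $\bar\mu(x+v)\le\bar\mu(x)+\mu_-(v)$ for $x\in\Hb_+$, $v=y-b\ebf_2\in\Hb_-$, $x+v\in\Hb_+$. But this reduced inequality is left unproved, and the two-path dichotomy you sketch does not close it: the branch staying in $\Hb_+$ gives $\bar\mu(x)+\mu_+(v)$, useful only when $\mu_+(v)\le\mu_-(v)$, while in the opposite regime the ``loop through the axis into $\Hb_-$'' path must travel horizontally from the vicinity of $nx$ (at distance $nx_1$ from the axis) to the axis and back out to $n(x+v)$, a detour costing order $n\,\mu_+(\ebf_1)(2x_1+v_1)$ --- not a small correction --- which for $\mu_+\gg\mu_-$ and $x_1$ large compared with $|v|$ overwhelms the saving $\mu_+(v)-\mu_-(v)$. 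Neither \eqref{eq: domination} nor Lemma~\ref{lem: mubar_bound} repairs this, so the step you defer as ``the core technical work'' is precisely the missing proof.

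The missing idea, and what the paper's proof uses, is monotonicity of the homogeneous norms under horizontal contraction towards the axis, coming from convexity plus reflection symmetry: $\mu_\pm(0,z_2)\le\mu_\pm(z_1,z_2)$ and $\mu_\pm(tz_1,z_2)\le\mu_\pm(z_1,z_2)$ for $t\in[0,1]$. With it your reduced inequality closes in a few lines without any path surgery: if $a_x$ is a minimizer in \eqref{eq: mubar_formula} for $x$, take $a=a_x+v_2$ to get $\bar\mu(x+v)\le\bar\mu((a_x+v_2)\ebf_2)+\mu_+\big((x_1+v_1)\ebf_1+(x_2-a_x)\ebf_2\big)\le\bar\mu(a_x\ebf_2)+|v_2|\bar\mu(\ebf_2)+\mu_+(x-a_x\ebf_2)\le\bar\mu(x)+\mu_-(v_2\ebf_2)\le\bar\mu(x)+\mu_-(v)$, using $0\le x_1+v_1\le x_1$, Lemma~\ref{lem: mubar_bound} and the contraction monotonicity twice. (The paper does it even more directly, with no axis-subadditivity lemma: it writes $x+y=(0,x_2)+(x_1+y_1,y_2)$, both in $\Hb_+$, applies the same-half-plane case, and proves $\bar\mu(0,x_2)\le\bar\mu(x)$ and $\bar\mu(x_1+y_1,y_2)\le\bar\mu(y)$ from the same convexity/symmetry input.) Two smaller corrections: in \emph{d)} the coupling \eqref{eq: domination} gives the lower bound $\bar\mu\ge\mulow$, the time constant for $\Flow=\max\{F_-,F_+\}$; the bound via $\Fhi=\min\{F_-,F_+\}$ that you wrote goes the wrong way, since $T\le T_{\Fhi}$. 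And for the converse of \emph{d)}, at $F_\pm(0)=p_c$ there is no infinite zero-weight cluster, so argue instead via Lemma~\ref{lem: mubar_bound} together with \eqref{eq: kesten_condition}, as the paper does. Parts \emph{a)} and \emph{c)} are fine as proposed (your \emph{c)} pleasantly avoids using \emph{b)}, unlike the paper).
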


\begin{proof}
The first three properties follow from formula~\eqref{eq: mubar_formula}. First, for $\lambda\in\R$, Claim~\ref{claim: axis} shows that $\bar{\mu}(\lambda\mathbf{e}_2) = |\lambda|\bar{\mu}(\mathbf{e}_2)$. Next, for any $\lambda\ge0$ and $x \in \Hb_+$ we have from~\eqref{eq: mubar_formula} that
\bea
\begin{aligned}
\bar{\mu}(\lambda x) \;&=\; \min_{a \in \mathbb{R}} \big[\bar{\mu}(a\ebf_2) + \mu_+(\lambda x - a\ebf_2)\big] \\
&=\; \lambda \min_{a \in \mathbb{R}} \big[\bar{\mu}((a/\lambda)\ebf_2) + \mu_+(x-(a/\lambda)\ebf_2)\big] \;=\;\lambda\,\bar{\mu}(x)\ .
\end{aligned}
\eea
The case $x\in\Hb_-$ is analogous, so this proves part~\emph{a)}.

For part~\emph{b)}, assume first that $x,y\in\Hb_+$. For $a,b\in\R$ we have by~\eqref{eq: mubar_formula} that
\[
\bar\mu(x+y) \,\le\, \bar\mu((a+b)\ebf_2)+\mu_+(x+y-(a+b)\ebf_2)\ .
\]
Claim~\ref{claim: axis} gives $\bar{\mu}((a+b)\ebf_2) = |a+b|\bar\mu(\ebf_2) \le \bar{\mu}(a\mathbf{e}_2) + \bar{\mu}(b\mathbf{e}_2)$, and subadditivity of $\mu_+$ implies 
\[
\bar\mu(x+y) \,\le\, \bar\mu(a\ebf_2)+\mu_+(x-a\ebf_2)+\bar\mu(b\ebf_2)+\mu_+(y-b\ebf_2)\ .
\]
Taking minimum of both $a$ and $b$ over $\R$ proves~\emph{b)} for $x,y \in \Hb_+$; the case $x,y\in\Hb_-$ is analogous. For the general case, let $x=(x_1,x_2)\in\Hb_-$ and $y=(y_1,y_2)\in\Hb_+$, and assume that $x+y\in\Hb_+$ (the case $x+y\in\Hb_-$ is again analogous). Writing $x+y=(0,x_2)+(x_1+y_1,y_2)$ we find that
$$
\bar\mu(x+y)\,\le\,\bar\mu(0,x_2)+\bar\mu(x_1+y_1,y_2)\ ,
$$
so it suffices to show that $\bar\mu(0,x_2)\le\bar\mu(x)$ and $\bar\mu(x_1+y_1,y_2)\le\bar\mu(y)$. By convexity of $\mu_\pm$, for $(z_1,z_2) \in \mathbb{R}^2$, $\mu_\pm(0,z_2)\le\frac{1}{2}\big[\mu_\pm(z_1,z_2)+\mu_\pm(-z_1,z_2)\big]=\mu_\pm(z_1,z_2)$. So by~\eqref{eq: mubar_formula}, for some $a\in\R$,
\bea
\begin{aligned}
\bar\mu(x)\;&=\;\bar\mu(a\ebf_2)+\mu_-(x-a\ebf_2)\;\ge\;\bar\mu(0,a)+\mu_-(0,x_2-a)\\
&\ge\;\bar\mu(0,a)+\bar\mu(0,x_2-a)\;\ge\;\bar\mu(0,x_2)\ ,
\end{aligned}
\eea
where the second inequality uses Lemma~\ref{lem: mubar_bound}. Using convexity of $\mu_+$, for $t\in[0,1]$,
\bea
\begin{aligned}
\mu_+(ty_1,y_2)\;&=\;\mu_+\big(t(y_1,y_2)+(1-t)(0,y_2)\big)\\
&\le\; t\mu_+(y_1,y_2)+(1-t)\mu_+(0,y_2)\;\le\;\mu_+(y_1,y_2)\ .
\end{aligned}
\eea
Via~\eqref{eq: mubar_formula}, since $0\le x_1+y_1\le y_1$ by assumption, we now conclude that for every $a\in\R$
$$
\bar\mu(x_1+y_1,y_2)\;\le\;\bar\mu(a\ebf_2)+\mu_+(x_1+y_1,y_2-a)\;\le\;\bar\mu(a\ebf_2)+\mu_+(y-a\ebf_2)\ .
$$
Minimizing over $a\in\R$ gives us $\bar\mu(x_1+y_1,y_2)\le\bar\mu(y)$, completing the proof of part~\emph{b)}.

Part~\emph{c)} is obtained from repeated use of part~\emph{b)}: for $x,y\in\R^2$,
$$
|\bar\mu(x)-\bar\mu(y)|\;\le\;\bar\mu(x-y)\;\le\;\bar\mu((x_1-y_1)\ebf_1)+\bar\mu((x_2-y_2)\ebf_2)\ ,
$$
which by part~\emph{a)} and Lemma~\ref{lem: mubar_bound} is at most $\max\{\mu_-(\ebf_2),\mu_+(\ebf_2)\}\|x-y\|_1$.

It remains to prove part~\emph{d)}. If $\bar\mu(x)\neq0$ for every $x\neq0$, then so is the case for $x=\ebf_2$. We therefore show that $\bar\mu(\ebf_2)\neq0$ implies $\max\{F_-(0),F_+(0)\}<p_c$, and that this in turn implies $\bar\mu(x)\neq0$ for every $x\neq0$. If $\bar\mu(\ebf_2)\neq0$ then $\mu_-(\ebf_2)$ and $\mu_+(\ebf_2)$ are nonzero; otherwise, this would contradict Lemma~\ref{lem: mubar_bound}. By~\eqref{eq: kesten_condition}, $F_-(0)<p_c$ and $F_+(0)<p_c$, which proves the first implication. If we instead assume that $\max\{F_-(0),F_+(0)\}<p_c$, \eqref{eq: kesten_condition} implies that the time constant $\mulow$ with respect to a homogeneous environment with edge-weights distributed as $\Flow$ (defined below~\eqref{eq: domination}) is nonzero for all $x\neq0$. However, by~\eqref{eq: domination}, $\bar\mu(x)\ge\mulow(x)$, which completes the proof of part~\emph{d)}.
\end{proof}

Based on the characterization of $\bar\mu$ and its subsequent properties, we next prove some properties of the asymptotic shape
$\overline\Wc=\{x\in\R^2: \bar{\mu}(x)\le1\}$, and end the proof of Theorem~\ref{thm: mu_bar_properties} by verifying the formula~\eqref{eq: convex_hull}.

\begin{prop}\label{prop: shape_properties}
The asymptotic shape satisfies the following properties.
\begin{enumerate}[\quad a)]
\item $\overline\Wc$ is a closed convex set with non-empty interior.
\item $\overline\Wc$ is compact if and only if $\max\{F_-(0),F_+(0)\}<p_c$.
\item $\overline\Wc$ equals the closed convex hull of the union of $\Wc_-$ and $\Wc_+$, and the straight line segment $\{0\}\times\big[-\bar{\mu}(\ebf_2)^{-1},\bar{\mu}(\ebf_2)^{-1}\big]$.
\end{enumerate}
\end{prop}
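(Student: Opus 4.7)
The plan is to derive parts (a) and (b) as direct consequences of Proposition~\ref{prop: mubar_properties} and Lemma~\ref{lem: mubar_bound}, and to obtain the hull identity in (c) from formula~\eqref{eq: mubar_formula}.

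For part (a), closedness of $\overline\Wc$ follows from the Lipschitz continuity of $\bar\mu$ in Proposition~\ref{prop: mubar_properties}(c), and convexity follows from the subadditivity and positive homogeneity of $\bar\mu$ (parts (a) and (b) there). For non-empty interior, set $C:=\max\{\mu_-(\ebf_2),\mu_+(\ebf_2)\}$: if $C>0$, the Lipschitz bound gives $\bar\mu(x)\le C\|x\|_1$, so the $\ell^1$-ball of radius $1/C$ lies in $\overline\Wc$; if $C=0$, the Lipschitz constant vanishes, hence $\bar\mu\equiv 0$ and $\overline\Wc=\R^2$. For part (b), since $\overline\Wc$ is closed, compactness is equivalent to boundedness. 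If $\max\{F_-(0),F_+(0)\}<p_c$, Proposition~\ref{prop: mubar_properties}(d) combined with continuity and positive homogeneity gives $\bar\mu(x)\ge c|x|$ with $c:=\min_{|x|=1}\bar\mu(x)>0$, so $\overline\Wc$ is bounded. Conversely, $\max\{F_-(0),F_+(0)\}\ge p_c$ forces $\bar\mu(\ebf_2)=0$, and then the entire $\ebf_2$-axis lies in $\overline\Wc$.

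For part (c), let $\mathcal{K}$ denote the claimed closed convex hull and $I$ the axis interval (interpreted as the whole $\ebf_2$-axis when $\bar\mu(\ebf_2)=0$). The inclusion $\mathcal{K}\subseteq\overline\Wc$ is immediate from Lemma~\ref{lem: mubar_bound}, from the fact that the endpoints of $I$ (or the entire axis) satisfy $\bar\mu\le 1$, and from the closed-convexity proved in (a). For the reverse inclusion, fix $x\in\overline\Wc\cap\Hb_+$ (the $\Hb_-$ case is symmetric), choose $a$ achieving the minimum in~\eqref{eq: mubar_formula}, and set $s:=\bar\mu(a\ebf_2)=|a|\bar\mu(\ebf_2)$ and $t:=\mu_+(x-a\ebf_2)$, so that $s+t\le 1$. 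When $\bar\mu(\ebf_2)>0$ and $s,t>0$, the identity
\begin{equation*}
x \,=\, s\cdot\tfrac{a\ebf_2}{s} + t\cdot\tfrac{x-a\ebf_2}{t} + (1-s-t)\cdot 0
\end{equation*}
expresses $x$ as a convex combination of the endpoint $\pm\bar\mu(\ebf_2)^{-1}\ebf_2\in I$, a point of $\partial\Wc_+$ (noting $x_1\ge 0$, so the first coordinate of the middle term is non-negative), and the origin $0\in\Wc_+$. The remaining subcases with $\bar\mu(\ebf_2)>0$ collapse to $x\in\Wc_+$ (when $s=0$, forcing $a=0$), to $x=a\ebf_2\in I$ (when $t=0$ with $\mu_+\not\equiv 0$), or to $\Wc_+=\Hb_+$ (when $\mu_+\equiv 0$), all contained in $\mathcal{K}$.

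The main obstacle is the degenerate case $\bar\mu(\ebf_2)=0$, when $s=0$ automatically and the scaling $a\ebf_2/s$ becomes meaningless. By Proposition~\ref{prop: mubar_properties}(d) this forces $\max\{F_-(0),F_+(0)\}\ge p_c$, and~\eqref{eq: mubar_formula} collapses to $\bar\mu(x)=\mu_+(x-a\ebf_2)=t\le 1$; for $t<1$ I would instead use the two-point convex combination
\begin{equation*}
x \,=\, (1-t)\cdot\tfrac{a}{1-t}\ebf_2 + t\cdot\tfrac{x-a\ebf_2}{t},
\end{equation*}
with the first factor lying on the axis (which now coincides with all of $I$) and the second in $\partial\Wc_+$. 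The borderline $t=1$ would be recovered by applying the same decomposition to $(1-\eps)x$ and invoking the closedness of $\mathcal{K}$ as $\eps\downarrow 0$, while $t=0$ is trivial since then $x$ itself lies either on the axis or in $\Wc_+$.
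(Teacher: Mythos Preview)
Your proof is correct and follows essentially the same route as the paper's: parts (a) and (b) are deduced from Proposition~\ref{prop: mubar_properties} and Lemma~\ref{lem: mubar_bound} in the same way, and part (c) is obtained by fixing a minimizer $a$ in~\eqref{eq: mubar_formula} and writing $x$ as a convex combination of a point on the axis segment and a point in $\Wc_+$. The only cosmetic differences are that you use a three-point combination (with the origin as third vertex) where the paper uses a two-point one with coefficients $s/(s+t)$ and $t/(s+t)$, and in the degenerate case $\bar\mu(\ebf_2)=0$ you handle $t<1$ directly before passing to a limit, whereas the paper goes straight to a limiting sequence; neither change affects the argument.
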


\begin{proof}
We first show that $\overline\Wc$ is a closed convex set with non-empty interior. These are all easy consequences of the derived properties of $\bar\mu$. By part~\emph{c)} of Proposition~\ref{prop: mubar_properties}, $\bar{\mu}$ is a continuous function on $\mathbb{R}^2$, and so $\overline\Wc = \{x\in\R^2 : \bar{\mu}(x) \leq 1\}$ is closed. By parts~\emph{a)} and~\emph{b)} of Proposition~\ref{prop: mubar_properties},
$$
\bar\mu(tx+(1-t)y)\,\le\, t\bar\mu(x)+(1-t)\bar\mu(y) \quad\text{for all } x,y \in \mathbb{R}^2 \text{ and } t \in [0,1]\ .
$$
Thus if $x,y\in\overline\Wc$, then also $tx+(1-t)y\in\overline\Wc$, proving convexity. Finally, by Lemma~\ref{lem: mubar_bound} and~\eqref{eq: lipschitz},
$$
\bar\mu(x)\;\le\;\max\{\mu_-(x),\mu_+(x)\}\;\le\;\max\{\mu_-(\ebf_2),\mu_+(\ebf_2)\}\|x\|_1\ ,
$$
showing that $\overline\Wc$ contains $\{x\in\R^2:\|x\|_1\le r\}$ for small values of $r>0$. This means that $\overline\Wc$ has nonempty interior.

We continue with part~\emph{b)}, which we will derive from part~\emph{d)} of Proposition~\ref{prop: mubar_properties}. If either $F_-(0)\ge p_c$ or $F_+(0)\ge p_c$, then $\bar\mu(\ebf_2)=0$ and therefore $\lambda\ebf_2\in\overline\Wc$ for every $\lambda\in\R$, so $\overline\Wc$ is unbounded. Suppose conversely that $\max\{F_-(0),F_+(0)\}<p_c$. Since $\bar\mu$ is continuous, it attains its infimum on the unit circle $\{x\in\R^2:|x|=1\}$, which has to be positive; otherwise would contradict part~\emph{d)} of Proposition~\ref{prop: mubar_properties}. Consequently, for $|x| > \sup_{|y|=1}[\bar{\mu}(y)]^{-1}$, we have $\bar\mu(x)=|x|\bar\mu(x/|x|)>1$ and $\overline\Wc$ is bounded, hence compact.

It remains to prove part~\emph{c)} and we first show that $\overline\Wc$ contains $\Wc_-$, $\Wc_+$ and $\{0\}\!\times\!\big[-\bar{\mu}(\ebf_2)^{-1},\bar{\mu}(\ebf_2)^{-1}\big]$.
Containment of $\Wc_-$ and $\Wc_+$ in $\overline\Wc$ follows from Lemma~\ref{lem: mubar_bound} in that $\bar\mu(x)\le\mu_\pm(x)\le1$ for every $x\in\Wc_\pm$, so $x\in\overline\Wc$. Containment of the interval follows from scaling: $\bar\mu(\lambda\ebf_2)=|\lambda|\bar\mu(\ebf_2)\le1$ for each $\lambda\in\big[-\bar{\mu}(\ebf_2)^{-1},\bar{\mu}(\ebf_2)^{-1}\big]$.

Conversely, we show that $\overline\Wc$ is a subset of the closed convex hull. Without loss in generality, let $x\in \overline\Wc \cap \Hb_+$ and let $a_x$ be such that $\bar{\mu}(x) = \bar{\mu}(a_x\mathbf{e}_2) + \mu_+(x-a_x\mathbf{e}_2)$. If $\mu_+\equiv0$, then $\mu_+(x)=0$, so $x\in\Wc_+$. Assume instead that both $\mu_+\not\equiv0$ and $\bar\mu(\ebf_2)\neq0$. To begin, if $a_x=0$ then $\mu_+(x)=\bar{\mu}(x)\leq 1$, so $x \in\Wc_+$. If instead $x=a_x\mathbf{e}_2$, then $|x|=|a_x| \leq \bar{\mu}(\mathbf{e}_2)^{-1}$ and $x \in \{0\}\!\times\! [-\bar{\mu}(\mathbf{e}_2)^{-1},\bar{\mu}(\mathbf{e}_2)^{-1}]$. If neither $a_x=0$ nor $x-a_x\ebf_2=0$, then $\bar\mu(a_x\ebf_2)$ and $\mu_+(x-a_x\ebf_2)$ are both nonzero, and
\[
x = \frac{\bar{\mu}(a_x\mathbf{e}_2)}{\bar{\mu}(x)} \left[ \frac{\bar{\mu}(x)}{\bar{\mu}(a_x\mathbf{e}_2)}a_x\mathbf{e}_2\right]+ \frac{\mu_+(x-a_x\mathbf{e}_2)}{\bar{\mu}(x)} \left[ \frac{\bar{\mu}(x)}{\mu_+(x-a_x\mathbf{e}_2)}(x-a_x\mathbf{e}_2)\right]\ ,
\]
which is a convex combination of a point in $\{0\}\!\times\! [-\bar{\mu}(\mathbf{e}_2)^{-1},\bar{\mu}(\mathbf{e}_2)^{-1}]$ (the first bracketed term) and one in $\Wc_+$ (the second). 

The last possibility is $\mu_+\not\equiv0$ but $\bar{\mu}(\mathbf{e}_2)=0$. In this case we show that $x$ is a limit of convex combinations: For $n \in \mathbb{N}$, let $x_n = \frac{1}{n}(na_x\mathbf{e}_2) + (1-\frac{1}{n})(x-a_x\mathbf{e}_2)$. Since $\mu_+(x-a_x\mathbf{e}_2) = \bar{\mu}(x) \leq 1$, this is a convex combination of an element of $\{0\}\!\times\! [-\bar{\mu}(\mathbf{e}_2)^{-1},\bar{\mu}(\mathbf{e}_2)^{-1}]$ and an element of $\Wc_+$. As $n \to \infty$, $x_n \to x$, showing that $x$ lies in the closed convex hull of these sets.
\end{proof}

The proof of Theorem~\ref{thm: mu_bar_properties} is now complete by virtue of~\eqref{eq: mubar_formula} and Proposition~\ref{prop: shape_properties}.

\section{Simultaneous convergence -- Proof of Theorem~\ref{thm: shape_theorem}}\label{sec: shape_theorem}

We proceed at this point with a proof of Theorem~\ref{thm: shape_theorem} -- the shape theorem for inhomogeneous environments. It will follow from now standard arguments. We assume throughout that $\E Y_-^2$ and $\E Y_+^2$ are finite, and first prove~\eqref{eq: inverted_shape}. If either $\E Y_-^2$ or $\E Y^2_+$ is infinite, then~\eqref{eq: inverted_shape} cannot hold since $T(0,z)$ is bounded below by the minimum of the four weights of edges adjacent to $z$, each of which is distributed as $F_+$ when $z \in \mathbb{Z}_+\times \mathbb{Z}$. Then we apply Borel-Cantelli with
$$
\sum_{z\in\Z^2}\Pr\big(T(0,z)>\bar\mu(z)+\eps|z|\big)\,\ge\,\sum_{z\in\Z_\pm\!\times\Z}\Pr(Y_\pm>\eps|z|)\,=\,\infty\ .
$$


\begin{proof}[{\bf Proof of~(\ref{eq: inverted_shape})}]
We first need to show that $T(0,z)$ may not be `too large' for more than a finite number of points in $\Z^2$. To quantify `large', we will fix a constant $M$ already at this stage: Recall that $T_\textup{dom}$ denotes passage times in a homogeneous environment distributed as $\Fhi$. By the latter part of Proposition~\ref{prop: homogeneousLDE} (for $\eps=1$ and $q=3$) there is a finite constant $M$, only depending on $\Fhi$, such that for every $x\in\R^2$ and $t\ge|x|$
\begin{equation}\label{eq: LD_dom}
\Pr\big(T_\textup{dom}(0,x)>Mt\big)\,\le\, M\,\Pr(\Yhi>t/M)+Mt^{-3}\ .
\end{equation}
(We have here used that passage times in a half-plane dominate those in the whole plane.)

To argue for~\eqref{eq: inverted_shape}, let $\eps>0$ and fix $\delta$ such that
 \[
0\, <\, \delta\, <\, \frac{\eps}{6} \min\left\{ \mu_-(\mathbf{e}_1)^{-1}, \mu_+(\mathbf{e}_1)^{-1},2, (2M)^{-1}\right\}\ .
 \]
Choose a finite set of unit vectors $u_1,\ldots,u_N$ such that every $u\in \mathbb{R}^2$ with $|u|=1$ satisfies $|u-u_i|<\delta$ for some $i$. For each $z\in\Z^2$ and $i=1,2,\ldots,N$,
\begin{equation}\label{eq: breakdown_1}
|T(0,z)-\bar\mu(z)| \,\le\, |T(0,|z|u_i)-\bar\mu(|z|u_i)|+|\bar\mu(|z|u_i)-\bar\mu(z)|+T(|z|u_i,z)\ .
\end{equation}
According to Theorem~\ref{thm: radial_convergence} there is an almost surely finite constant $K_1$ such that for all $i=1,\ldots,N$ and $|z|\ge K_1$ we have $|T(0,|z|u_i)-|z|\bar\mu(u_i)|<\delta|z|<\eps|z|/3$. For every $z\in\Z^2$ there is at least one $i$ for which $|z-|z|u_i|\le\delta|z|$, and therefore
\[
|\bar\mu(|z|u_i)-\bar\mu(z)| \,\leq\, \| |z|u_i-z\|_1 \max\{\mu_-(\ebf_1),\mu_+(\ebf_1)\}\ ,
\]
due to part \emph{c)} of Proposition~\ref{prop: mubar_properties}. Comparing $\ell^1$ and $\ell^2$ norms leaves
\[
|\bar\mu(|z|u_i)-\bar\mu(z)|\;\leq\; 2\delta |z| \max\{\mu_-(\mathbf{e}_1),\mu_+(\mathbf{e}_1)\} \;\leq\; \eps|z|/3\ .
\]
Last, if we only show that
\be\label{eq: CDsum}
\sum_{z\in\Z^2}\Pr\big(T(|z|u_i,z)>\eps|z|/3\big)<\infty\ ,
\ee
for $i$ chosen so that $|z-|z|u_i| < \delta|z|$, Borel-Cantelli would give a random, but almost surely finite, constant $K_2$ such that if $|z| \geq K_2$ then $T(|z|u_i,z) \leq \eps|z|/3$. Along with the other bounds through~\eqref{eq: breakdown_1}, $|T(0,z)-\bar{\mu}(z)| \leq \eps |z|$ for $|z| \geq \max\{K_1,K_2\}$ and complete be the proof of~\eqref{eq: inverted_shape}.

To verify~\eqref{eq: CDsum}, recall that $T(x,y)\le T_\textup{dom}(x,y)$ for every $x,y\in\R^2$ by~\eqref{eq: domination}. Since the choice of $\delta$ ensures that $\eps|z|/3>M\delta|z|$ and $\delta|z|\geq|z-|z|u_i|$, using~\eqref{eq: LD_dom} we arrive at
\bea
\begin{aligned}
\sum_{z\in\Z^2}\Pr\big(T(|z|u_i,z)>\eps|z|/3\big)\;&\le\;\sum_{z\in\Z^2}\Pr\big(T_\textup{dom}(|z|u_i,z)>M\delta|z|\big)\\
&\le\;\sum_{z\in\Z^2}\Big[M\,\Pr(\Yhi>\delta|z|/M)+M(\delta|z|)^{-3}\Big]\\
&\le\;\sum_{n\in\N}\sum_{\|z\|_1=n}\Big[M\,\Pr(\Yhi>\delta n/(2M))+M(\delta n/2)^{-3}\Big]\ ,
\end{aligned}
\eea
where we also use that $\|z\|_1\le2|z|$. Since the number of points in $\Z^2$ with $\ell^1$-norm $n$ is $4n$, the above sum is finite when $\E\Yhi^2$ is. By Lemma~\ref{lem: cd_moment_bound}, the latter coincides with finiteness of $\E Y_-^2$ and $\E Y_+^2$. As this was assumed,~\eqref{eq: CDsum} follows and therewith~\eqref{eq: inverted_shape}.
\end{proof}

We proceed with the second statement of Theorem~\ref{thm: shape_theorem}, and assume for that, in addition, $\max\{F_-(0),F_+(0)\}<p_c$. That $\overline\Wc$ in this case is convex, compact and has non-empty interior was seen in Proposition~\ref{prop: shape_properties}. It remains to prove the concluding inclusion formula of Theorem~\ref{thm: shape_theorem}, which will follow from~\eqref{eq: inverted_shape} via a straightforward inversion argument. In a first step, we prove a discretized inclusion formula.

\begin{claim*}
With probability 1, for every $\eps>0$ and sufficiently large $t$
\be\label{eq: discrete_inclusion}
\{z\in\Z^2:\bar\mu(z)\le(1-\eps)t\}\subset\{z\in\Z^2:T(0,z)\le t\}\subset\{z\in\Z^2:\bar\mu(z)\le(1+\eps)t\}\ .
\ee
\end{claim*}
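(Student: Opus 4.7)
The plan is to deduce the discretized inclusions directly from the rate statement~\eqref{eq: inverted_shape}, combined with the fact that under the hypothesis $\max\{F_-(0),F_+(0)\}<p_c$ the function $\bar\mu$ is comparable to the Euclidean norm. Specifically, part d) of Proposition~\ref{prop: mubar_properties} together with the continuity from part c) and compactness of the unit sphere provide a deterministic constant $c>0$ with $\bar\mu(z)\ge c|z|$ for all $z\in\R^2$; part c) itself supplies the matching upper bound $\bar\mu(z)\le C\|z\|_1$, where $C:=\max\{\mu_-(\ebf_2),\mu_+(\ebf_2)\}$. Fixing $\eps\in(0,1)$, I set $\delta:=c\eps/(1+\eps)$ and invoke~\eqref{eq: inverted_shape} to produce an almost surely finite random radius $R$ such that $|T(0,z)-\bar\mu(z)|\le\delta|z|$ whenever $|z|\ge R$.

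For the first inclusion, suppose $z\in\Z^2$ satisfies $\bar\mu(z)\le(1-\eps)t$. If $|z|\ge R$, then
$$
T(0,z)\,\le\,\bar\mu(z)+\delta|z|\,\le\,\bar\mu(z)(1+\delta/c)\,\le\,(1-\eps)(1+\delta/c)\,t\,\le\,t,
$$
since the choice of $\delta$ gives $\delta/c=\eps/(1+\eps)\le\eps/(1-\eps)$. If $|z|<R$, there are only finitely many such lattice points and each has $T(0,z)<\infty$ almost surely, so for all $t$ past some random threshold $t_1$ every such $z$ automatically satisfies $T(0,z)\le t$.

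For the second inclusion, suppose $z\in\Z^2$ satisfies $T(0,z)\le t$. If $|z|\ge R$, then
$$
\bar\mu(z)\,\le\,T(0,z)+\delta|z|\,\le\,t+(\delta/c)\,\bar\mu(z),
$$
which rearranges to $\bar\mu(z)\le t/(1-\delta/c)=(1+\eps)t$ by the choice of $\delta$. If $|z|<R$, then $\bar\mu(z)\le C\|z\|_1\le C\sqrt{2}\,R$, which lies below $(1+\eps)t$ for all $t$ past a deterministic threshold $t_2$.

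Taking $t\ge\max\{t_1,t_2\}$ gives both inclusions simultaneously on a single event of probability one. The case $\eps\ge1$ is disposed of either by applying the argument above with $\eps$ replaced by $1/2$, or by observing that $\bar\mu(z)\le(1-\eps)t$ then forces $z=0$ (using part d)) and so the first inclusion is trivial. No new probabilistic input is needed beyond~\eqref{eq: inverted_shape}; the only book-keeping, which I expect to be the main point requiring care, is the separate treatment of the finitely many lattice points with $|z|<R$, where the large-$|z|$ linear error bound is not yet effective.
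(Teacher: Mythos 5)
Your argument is correct and follows essentially the same route as the paper: both deduce~\eqref{eq: discrete_inclusion} from~\eqref{eq: inverted_shape} together with the equivalence of $\bar\mu$ and the Euclidean norm (via parts \emph{c)} and \emph{d)} of Proposition~\ref{prop: mubar_properties}), converting the additive error $\delta|z|$ into a multiplicative one and absorbing the finitely many exceptional lattice points with $|z|<R$ into the ``sufficiently large $t$'' clause. The only nitpick is that your threshold $t_2$ is random rather than deterministic, since it depends on the random radius $R$; this does not affect the conclusion.
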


\begin{proof}
Let $\eps>0$, and introduce the set $\mathcal{Z}_\eps:=\big\{z\in\Z^2:\tfrac{1}{1+\eps}\,\bar\mu(z)\le T(0,z)\le \tfrac{1}{1-\eps}\,\bar\mu(z)\big\}$. Given $z\in \mathcal{Z}_\eps$, note that $\bar\mu(z)\le(1-\eps)t$ implies that $T(0,z)\le t$, which in turn would imply that $\bar\mu(z)\le(1+\eps)t$. In other words, the restriction of~\eqref{eq: discrete_inclusion} to $\mathcal{Z}_\eps$ holds for all $t\ge0$, and only points not contained in $\mathcal{Z}_\eps$ may cause~\eqref{eq: discrete_inclusion} to fail. So we will use~\eqref{eq: inverted_shape} to show that $\Z^2\setminus \mathcal{Z}_\eps$ is almost surely finite.

Under the assumption $\max\{F_-(0),F_+(0)\}<p_c$, the function $\bar\mu$ is bounded away from both zero and infinity on the unit circle $\{x\in\R^2:|x|=1\}$. If follows that $\bar\mu$ is equivalent to the Euclidean metric on $\R^2$. Using~\eqref{eq: inverted_shape} we find an almost surely finite constant $K$ such that
$$
\big(1-\tfrac{\eps}{1+\eps}\big)\bar\mu(z)\;\le\; T(0,z)\;\le\;\big(1+\tfrac{\eps}{1-\eps}\big)\bar\mu(z) \quad\text{for all }|z|\ge K\ ,
$$
which is equivalent to inclusion in $\mathcal{Z}_\eps$.
\end{proof}

To go from~\eqref{eq: discrete_inclusion} to the inclusion formula of Theorem~\ref{thm: shape_theorem}, it suffices to appeal to Lipschitz continuity of $\bar\mu$: If $z\in\Z^2$ and $x\in[0,1)^2$, then $|\bar\mu(z+x)-\bar\mu(z)|$ is bounded by $2\max\{\mu_-(\ebf_2),\mu_+(\ebf_2)\}$. So, if~\eqref{eq: discrete_inclusion} holds for some $\eps>0$ and all large enough $t$, then
$$
(1-2\eps)\overline\Wc\subset\tfrac{1}{t}\Wc_t\subset(1+2\eps)\overline\Wc\quad\text{for all large enough }t\ .
$$
This ends the proof of Theorem~\ref{thm: shape_theorem}.

\section{Comparisons between time constants in the vertical direction}

We first aim to prove Theorem~\ref{thm: domination}, and so we recall some notation. As in Section~\ref{sec: notation}, let $\{\xi_e\}_{e \in \mathcal{E}}$ be a collection of random variables independent and uniform on $[0,1]$, and let $F^{-1}(x) = \min \{ y \in \mathbb{R} : F(y) \geq x\}$. Set $\tau_e^{\pm} = F_{\pm}^{-1}(\xi_e)$ for each $e\in\mathcal{E}$, and let $\tau_e$ equal $\tau_e^-$ or $\tau_e^+$ depending on whether $e$ has at least one endpoint in the interior of $\Hb_-$ or not. This construction produces a coupling between three environments, two homogeneous in which each edge-weight is distributed as $F_\pm$, and one inhomogeneous.

\begin{proof}[\bf Proof of Theorem~\ref{thm: domination}]
Assume that $\E Y_-$ and $\E Y_+$ are both finite. The upper bound $\bar\mu(\ebf_2)\le\min\{\mu_-(\ebf_2),\mu_+(\ebf_2)\}$ was already proved in Lemma~\ref{lem: mubar_bound}. So we will assume $F_+ \prec F_-$ and prove $\bar\mu(\ebf_2)\ge\min\{\mu_-(\ebf_2),\mu_+(\ebf_2)\}$, arguing similarly to~\cite[Theorem~2.9(a)]{vdbkes93}.

Given an integer $N\geq 1$, let $E_N$ denote the set of edges with both endpoints in $[-N,N]^2$. For a set of edge weights $\sigma = \{\sigma_e\}_{e \in E_N}$ and path $\Gamma \subset E_N$, let $T_\sigma(\Gamma) = \sum_{e \in \Gamma}\sigma_e$ and define $T_\sigma^N(x,y)$ as the minimum of $T_\sigma(\Gamma)$ over all such $\Gamma$ connecting $x$ and $y$. Fix an enumeration $e_1, e_2, \ldots, e_{|E_N|}$ of $E_N$ which goes through the edges of the right half-plane first. For each $j=0, \ldots, |E_N|$, let $\sigma^j=\{\sigma_e^j\}_{e\in E_N}$ be the family given by $\sigma_{e_i}^j = \tau_{e_i}^+$ for $i \leq j$ and $\tau_{e_i}^-$ otherwise. Last, let $\sigma^{j,t}=\{\sigma_e^{j,t}\}_{e\in E_N}$ equal $\{\sigma_e^j\}$ except $\sigma_{e_j}^{j,t} = t$, and let $g_j(t) = T_{\sigma^{j,t}}^N(x,y)$. As a function of $t$, $g_j$ is the minimum of increasing linear functions, so it is non-decreasing and concave.

For $N \geq \|x\|_1+\|y\|_1+1$, $\E T_{\sigma^j}^N(x,y)$ is finite from Lemma~\ref{lem: cd_moment_bound}. Therefore, by Fubini's theorem, for almost every realization of $\{(\tau_e^+,\tau_e^-)\}_{e\in E_N}$, the integrals $\int g_j(t) \,\text{d}F_+(t)$ and $\int g_j(t) \,\text{d}F_-(t)$ are finite. So using the definition of more variable,
\[
\E T_{\sigma^{j-1}}^N(x,y) \;=\; \E g_j(\tau_{e_j}^-) \;\geq\; \E g_j(\tau_{e_j}^+) \;=\; \E T_{\sigma^j}^N(x,y)\quad\text{for } j = 1, \ldots, |E_N|\ .
\]
In particular,
\[
\E T_{F_-}^N(x,y) \;\geq\; \E T^N(x,y) \;\geq\; \E T_{F_+}^N(x,y)\ ,
\]
where $T^N$ and $T_F^N$ are the restrictions of $T$ and $T_F$ to $[-N,N]^2$ as above. By monotone convergence, $\E T(x,y) \geq \E T_{F_+}(x,y)$. Choosing $x=0$ and $y=n\ebf_2$,
\[
\tfrac{1}{n} \E T(0,n\ebf_2) \;\geq\; \tfrac{1}{n} \E T_{F_+}(0,n\ebf_2)\ .
\]
Taking limits, $\bar{\mu}(\ebf_2)\ge \mu_+(\ebf_2)$, and this proves the theorem.
\end{proof}

\begin{remark}
The same argument applies in the case of a columnar defect, i.e., when edges along the vertical axis are assigned weights according to $F_0$ while remaining edges are assigned weights according to $F_-=F_+$. Assuming that $\E Y_\pm$ and $\E Y_0$ are finite, this shows that $\bar\mu(\ebf_2)=\mu_-(\ebf_2)=\mu_+(\ebf_2)$ as long as $F_-=F_+\prec F_0$.
\end{remark}

\begin{proof}[\bf Proof of Theorem~\ref{thm: strict_inequality}]
There are distributions such that in the second-coordinate direction $\bar{\mu}<\min\{\mu_-,\mu_+\}$, as seen in the following example. Let $F_+$ be an arbitrary non-degenerate distribution with $\mu_+=1$, and let $F_-$ be the degenerate distribution $\delta_1$. From \cite[Theorem~6.4]{kesten86}, we can find $y<\mu_+=1$ such that $F_+(y)>0$. Now choose any integer $K > 2y/(1-y)$ and note that 
\begin{equation}\label{eq: K_condition}
(K+2)y\,<\,K\ .
\end{equation}
Next, define the event $A$ that the edges in the set 
\[
\mathcal{K} = \big\{\{(-1,0),(0,0)\}, \{(-1,K),(0,K)\} \cup \{(-1,k),(-1,k+1)\} : k =0, \ldots, K-1\big\}
\]
have weight at most $y$. We now build a random path $\gamma$ from the origin along the $\mathbf{e}_2$ axis in a configuration $\{\tau_e\}_{e\in\Ec}$ as a concatenation of paths $(\gamma_i)_{i \geq 0}$ defined to have edge sets
\[
\mathcal{E}(\gamma_i) = \begin{cases}
T_{iK\mathbf{e}_2} \mathcal{K} & \text { if } T_{iK\mathbf{e}_2} A \text{ occurs}\ , \\
\{\{(0,iK+j),(0,iK+j+1)\} : j = 0, \ldots, K-1\} & \text{ otherwise}\ ,
\end{cases}
\]
where $T_{iK\mathbf{e}_2}$ is the operator that translates the point $iK\mathbf{e}_2$ to the origin. In words, the path $\gamma_i$ follows the edges in the translate $T_{iK\ebf_2}\mathcal{K}$ of $\mathcal{K}$ (from $(0,iK)$ to $(0,(i+1)K)$) if they all have weight at most $y$ and follows the $\mathbf{e}_2$-axis otherwise.

We can now compute the passage time of $\gamma$ up to its intersection with $(0,jK)$ as 
\[
T((\gamma_i)_{i\le j-1}) = K(j-N) + (K+2)Ny\ ,
\]
where $N$ is the random number of occurrences of translates of $A$:
\[
N := \sum_{i=0}^{j-1} \mathbf{1}_{T_{iK\mathbf{e}_2}A}(\{\tau_e\}_{e\in\Ec})\ .
\]
This is an upper bound for the minimal passage time, so
\[
\frac{T(0,jK\ebf_2)}{jK} \,\leq\, 1+ \frac{N}{j} \cdot \frac{(K+2)y-K}{K}\ .
\]
As $j \to \infty$ the left side converges to $\bar{\mu}(\mathbf{e}_2)$ almost surely. The law of large numbers implies that the right side converges almost surely to $1 + \mathbb{P}(A) \frac{(K+2)y-K}{K}$, which by~\eqref{eq: K_condition} is strictly less than 1. 

This does not yet prove Theorem~\ref{thm: strict_inequality}, since $F_-$ is degenerate, so we do a limiting argument. Let $F_-^{(m)}=(1-\frac{1}{m})\delta_1+\frac{1}{m}\delta_2$. Let $T$ denote passage time with respect to $F_-$ and $F_+$, and $T^{(m)}$ with respect to $F_-^{(m)}$ and $F_+$. Last, let $\bar{\mu}_m$ be the time constant for the time $T^{(m)}$. Choose $\eps>0$ and $n$ such that $\frac{1}{n}\E[T(0,n\ebf_2)]<1-2\eps$. By Monotone Convergence Theorem, for large $m$, $\frac{1}{n}\E[T^{(m)}(0,n\ebf_2)]<1-\eps$. Since $\bar{\mu}_m(\ebf_2)$ is obtained as an infimum over $n$, it is strictly less than 1. Thus the theorem holds with the pair $(F_+,F_-^{(m)})$, when $m$ is large.
\end{proof}

\begin{remark}
The above argument can be generalized in some simple ways. By scaling, the restriction $\mu_+=1$ can be removed. Further, $(F_-^{(m)})$ can be chosen as any sequence converging weakly to a delta mass, thus allowing for examples using continuous distributions and distributions with unbounded support.
\end{remark}

\section{Randomly introduced columnar defects}\label{sec: random_defects}

We finish by returning to the effect of columnar defects. We will show that for every $\eps>0$, if we introduce a defect independently with probability $\eps$ for each column $\{n\}\times\Z$, then the time constant will change in the vertical direction.

Equip $\{0,1\}^\Z$ with product measure $\Pbf_\eps$ whose marginal assigns weight $\eps\in(0,1)$ to $1$. Let $\xi=\{\xi_e\}_{e\in\Ec}$ be an independent family uniform on $[0,1]$, and let $F$ and $F_0$ be two distribution functions supported on $[0,\infty)$. As before, let the law for $\xi$ be denoted by $\Pr$. Given $\eta\in\{0,1\}^\Z$, assign to each vertical edge $e=\{(x,y),(x,y+1)\}$
\bea
\tau_e=\left\{
\begin{aligned}
&F^{-1}(\xi_e) & \text{if }\eta_x=0\ ,\\
&F^{-1}_0(\xi_e) & \text{if }\eta_x=1\ .
\end{aligned}
\right.
\eea
Each horizontal edge $e=\{(x,y),(x+1,y)\}$ is assigned weight $F^{-1}_0(\xi_e)$ in case $\eta_x=\eta_{x+1}=1$, and $F^{-1}(\xi_e)$ otherwise. We have thus created two layers of randomness, given by $\eta\sim\Pbf_\eps$ and $\xi\sim\Pr$, which respectively determines where columnar defects will occur, and the actual edge weights. Write $T_\eta$ for the passage time in the environment $(\eta,(\tau_e))$. We will prove the following.

\begin{thm}\label{thm: random_defects}
Let $\eps>0$, $F_0\prec F$, and assume that $\E Y_F$ and $\E Y_{F_0}$ are finite. Then
$$
\lim_{n\to\infty}\frac{T_\eta(0,n\ebf_2)}{n}=\mu_{F_0}(\ebf_2)\quad\text{with $(\mathbf{P}_\eps \!\times\! \Pr)$-probability one}\ .
$$
\end{thm}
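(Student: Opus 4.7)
The plan is to sandwich $g(\eta) := \lim_n T_\eta(0, n\ebf_2)/n$ between $\mu_{F_0}(\ebf_2)$ from above almost surely and from below in expectation. The limit exists because, conditionally on $\eta$, the edge weights form a stationary, ergodic process under vertical shifts of $\xi$; applying Kingman's subadditive ergodic theorem conditionally on $\eta$ (with integrability $\E T_\eta(0, n\ebf_2) < \infty$ supplied by the analogue of Lemma~\ref{lem: cd_moment_bound} applied to the pointwise maximum of $F$ and $F_0$) yields a $\sigma(\eta)$-measurable limit $g(\eta)$ almost surely and in $L^1$, with $\E g(\eta) = \lim_n \E T_\eta(0, n\ebf_2)/n$.

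For the lower bound $\E g(\eta)\ge \mu_{F_0}(\ebf_2)$, I would repeat the Berg-Kesten interpolation from the proof of Theorem~\ref{thm: domination} essentially verbatim: enumerate the edges of $[-N,N]^2$ and replace the weights of non-defect edges one at a time from $F^{-1}(\xi_e)$ to $F_0^{-1}(\xi_e)$. Since the passage time is concave non-decreasing in each edge weight and $F_0\prec F$, each replacement decreases the expected passage time. Sending $N\to\infty$ gives $\E T_\eta(0,n\ebf_2)\ge \E T_{F_0}(0,n\ebf_2)$, and dividing by $n$ produces the desired lower bound.

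For the upper bound $g(\eta)\le \mu_{F_0}(\ebf_2)$ almost surely, exploit the positive density of long all-defect intervals. Fix $m\ge 2$; the smallest $K_m=K_m(\eta)\ge 0$ with $\eta_{K_m-m}=\ldots=\eta_{K_m+m}=1$ is $\Pbf_\eps$-almost surely finite, and inside the strip $S_m:=\{K_m-m,\ldots,K_m+m\}\times\Z$ every edge carries an $F_0$-weight. Subadditivity then gives
\[
T_\eta(0,n\ebf_2)\,\le\,T_\eta(0,K_m\ebf_1)+T^{(m)}_{F_0}(K_m\ebf_1,K_m\ebf_1+n\ebf_2)+T_\eta(K_m\ebf_1+n\ebf_2,n\ebf_2),
\]
with $T^{(m)}_{F_0}$ the passage time confined to $S_m$. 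Divided by $n$, the first term is $O(1/n)$; the third vanishes almost surely by the Borel-Cantelli trick of~\eqref{eq: mean_trick} applied conditionally on $\eta$, using $\E[T_\eta(0,K_m\ebf_1)\mid\eta]<\infty$ from Lemma~\ref{lem: cd_moment_bound}; and the middle converges almost surely to the strip time constant $\mu^{(m)}_{F_0}(\ebf_2)$ by Kingman applied inside $S_m$. Hence $g(\eta)\le \mu^{(m)}_{F_0}(\ebf_2)$ for every $m$. Finally, the Fekete-type swap
\[
\lim_{m\to\infty}\mu^{(m)}_{F_0}(\ebf_2)\,=\,\inf_{m,n}\frac{\E T^{(m)}_{F_0}(0,n\ebf_2)}{n}\,=\,\inf_n\frac{\E T_{F_0}(0,n\ebf_2)}{n}\,=\,\mu_{F_0}(\ebf_2),
\]
which relies on the monotone convergence $T^{(m)}_{F_0}\downarrow T_{F_0}$ together with an interchange of infima, gives $g(\eta)\le \mu_{F_0}(\ebf_2)$ almost surely, and combined with the lower bound forces $g(\eta)=\mu_{F_0}(\ebf_2)$ almost surely. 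I expect the main obstacle to be the upper bound: interfacing the random-$K_m$ strip construction with a clean application of Kingman inside $S_m$, and ensuring the horizontal correction terms are genuinely $o(n)$; the convergence of strip time constants to the full time constant is then a routine Fekete swap.
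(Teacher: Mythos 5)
Your proposal is correct and follows essentially the same route as the paper: the lower bound via the van den Berg--Kesten interpolation exactly as in the proof of Theorem~\ref{thm: domination}, and the upper bound by locating (Borel--Cantelli) an almost surely present run of defect columns, routing through that strip by subadditivity with the horizontal correction terms handled as in~\eqref{eq: mean_trick}, and applying the subadditive ergodic theorem inside the strip. The only difference is cosmetic: where the paper cites \cite[Proposition~8]{A13-2} for the convergence of the strip time constant to $\mu_{F_0}(\ebf_2)$, you prove it directly by the monotone-convergence/Fekete infimum swap (taking the strip wide enough, e.g.\ five columns, so that four edge-disjoint paths give integrability under $\E Y_{F_0}<\infty$), which is a perfectly valid self-contained substitute.
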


Together with the characterization of~\cite[Theorem~2.9(b)]{vdbkes93}, Theorem~\ref{thm: random_defects} gives a weak criterion for randomly introduced columnar defects to result in a strictly smaller time constant: Let $\lambda=\inf\supp(F)$ and let $\vec{p}_c$ denote the critical probability for oriented bond percolation on $\Z^2$. If $F$ satisfies $F(0)<p_c$ and $F(\lambda)<\vec{p}_c$, then $\mu_{F_0}<\mu_F$ for any $F_0\neq F$ which is more variable than $F$.

\begin{proof}
Let $F$ and $F_0$ be given, and fix $\delta>0$. First note that the argument used to prove Theorem~\ref{thm: domination} also shows that, for every $\eta\in\{0,1\}^\Z$, $\liminf_{n\to\infty}T_\eta(0,n\ebf_2)/n\ge\mu_{F_0}$ $\Pr$-almost surely. So, it suffices to prove the remaining upper bound.

Let $T_{\eta,K}$ denote the restriction of $T_\eta$ to paths contained in the cylinder given by $|x|\le K$. The limit $\lim_{n\to\infty}\frac{1}{n}T_{\eta,K}(0,n\ebf_2)$ exists almost surely by the Subadditive Ergodic Theorem. Moreover, there is $K=K(\delta)$ such that for every $\eta$ with $\eta_x=1$ for all $|x|\leq K$, the limit is bounded by $\mu_{F_0}(\ebf_2)+\delta$ (see e.g.~\cite[Proposition~8]{A13-2}). In particular,
\be\label{eq: tube}
\limsup_{n\to\infty}\frac{T_\eta(0,n\ebf_2)}{n}\,\le\,\mu_{F_0}(\ebf_2)+\delta \quad\text{with $\Pr$-probability 1}\ .
\ee
By Borel-Cantelli, $\{\eta_{x+m}=1\text{ for all }|x| \leq K\}$ occurs for some $m\ge0$ for $\Pbf_\eps$-almost every $\eta$. Let $M<\infty$ denote the least positive integer $m$ for which it does. By subadditivity
$$
T_\eta(0,n\ebf_2)\,\le\, T_\eta(0,M\ebf_1)+T_\eta(M\ebf_1,M\ebf_1+n\ebf_2)+T_\eta(M\ebf_2+n\ebf_2,n\ebf_2)\ ,
$$
so division by $n$ and taking limits we obtain from~\eqref{eq: tube} that
$$
(\mathbf{P}_\eps \!\times\! \Pr)\Big(\limsup_{n\to\infty}\frac{T_\eta(0,n\ebf_2)}{n}\le \mu_{F_0}(\ebf_2)+\delta\Big)=1\ .
$$
Since $\delta>0$ was arbitrary, this concludes the proof.
\end{proof}

\begin{remark}
One may define various versions of the above defected model. For example, we could take all edge-weights for horizontal edges to be distributed as $F$ as well (with defects only present on vertical edges) and the limit in this case would be the time constant for the lattice with horizontal weights assigned from $F$ and vertical ones from $F_0$.
\end{remark}

\appendix

\section{Proof sketch for Proposition~\ref{prop: homogeneousLDE}}

We finish with an outline of the proof of Proposition~\ref{prop: homogeneousLDE}. The first inequality appeared in~\cite{A13} and the proof is a modification of the strategy of Grimmett and Kesten~\cite{grikes84,kesten86}, so we omit it. The proof of the second inequality is more involved and we sketch the proof here.

The original proof of Proposition~\ref{prop: homogeneousLDE} in~\cite{A13,A13-2} was for $x\in\Z^2$, and not for $x\in\R^2$. However, the latter easily follows from the former: let $z_x\in\Z^2$ be such that $x\in z_x+[0,1)^2$. Because $|\mu_F(x)-\mu_F(z_x)|\le2\mu_F(\ebf_1)$, for $t \geq \varepsilon/(2\mu_F(\mathbf{e}_1))$,
$$
\Pr\big(T_F(0,x)-\mu_F(x)>2\eps t\big)\le\Pr\big(T_F(0,z_x)-\mu_F(z_x)>\eps t\big)
$$
and we can apply the integer case explained below to the right side.

The goal of the rest of the appendix is to outline the proof of \eqref{eq: LD} using a regeneration argument from \cite{A13,A13-2}. Given $z\in\Z^2$ and $r>0$, let $\Cyl(z,r)$ denote the infinite cylinder $\bigcup_{a\in\R}\{x\in\R^2:|x-az|\le r\}$. We will assume throughout that $\E Y_+<\infty$ and assign i.i.d. passage times from $F_+$ to all edges in $(\mathbb{Z}^2,\mathcal{E})$.

\medskip
\noindent
{\bf Step 1.} \emph{Comparison between cylinder and full-space.} By subadditivity (Fekete's lemma), the cylinder time constant exists: 
\be\label{eq:fekete}
\mu_{\Cyl(z,r)}:=\lim_{n\to\infty}\frac{\E T_{\Cyl(z,r)}(0,nz)}{n}\ ,
\ee
where $T_{\Cyl(z,r)}(x,y)$ is the passage time over paths in $\Cyl(z,r)$. Furthermore, 
\be\label{eq:muKconv}
\lim_{r\to\infty}\mu_{\Cyl(z,r)}=\mu_+(z) \text{ for all } z \in \mathbb{Z}^2\ .
\ee
(See, for example, \cite[Proposition~8]{A13-2}.) So when the thickness of the cylinder increases, its time constant becomes arbitrarily close to the time constant for $F_+$.

\medskip
\noindent
{\bf Step 2.}  \emph{A regenerative approach.} We now compare $T_{\Cyl(z,r)}(0,nz)$ and the sum of travel times between randomly chosen `cross-sections' of $\Cyl(z,r)$. By symmetry we may assume $z = (z_1,z_2) \in\Z^2$ has $z_1,z_2\ge0$. Let $\Hc_n=\{y\in\Z^2: y_1+y_2=n\}$, $r\ge0$, and let $\tbar\in\R_+$ have $\Pr(\tau_e\le\tbar)>0$. Set
\bea
\begin{aligned}
E_n(z,r)&:=\{\{x,y\} \in \mathcal{E} : x\in \Cyl(z,r)\cap\Hc_{n\|z\|_1-1}, y \in \Cyl(z,r)\cap\Hc_{n\|z\|_1}\}\ ,\\
A_n(z,r)&:=\{\{\tau_e\}_{e\in\Ec} : \tau_e\le\tbar\text{ for all }e\in E_n(z,r)\}\ ,\\
\rho_j(z,r)&:=\min\{n>\rho_{j-1}(z,r):A_n(z,r)\text{ occurs}\}\text{ for }j\ge1,\quad \rho_0=0\ .
\end{aligned}
\eea
When clearly understood from the context, the reference to $z$ and $r$ will be dropped.

Note that $\{A_n(z,r)\}_{n\geq1}$ are i.i.d., so $\{\rho_j-\rho_{j-1}\}_{j\ge1}$ are independent geometrically distributed with success probability $\Pr(A_0(z,r))$ and $\{T_{\Cyl(z,r)}(\Hc_{\rho_{j-1}\|z\|_1},\Hc_{\rho_j\|z\|_1})\}_{j\ge1}$ are i.i.d. This sequence will serve as the increments in a renewal sequence. We note the following:

\begin{prop}\label{prop:regincrement}
If $\E Y_+<\infty$, then $\E\big[T_{\Cyl(z,r)}(\Hc_{\rho_0\|z\|_1},\Hc_{\rho_1\|z\|_1})^2\big]<\infty$ for all large $r$.
\end{prop}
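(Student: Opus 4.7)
The plan is to write $W := T_{\Cyl(z,r)}(\Hc_0,\Hc_{\rho_1\|z\|_1})$ as a random sum $\sum_{j=1}^{\rho_1} U_j$ of i.i.d.\ one-slab passage times truncated at the geometric stopping time $\rho_1$, and then bound the second moment via a Cauchy--Schwarz step. First, since $(A_n(z,r))_{n\geq 1}$ are i.i.d.\ with $\Pr(A_n) = p := q^{|E_1(z,r)|} > 0$ (where $q = \Pr(\tau_e \leq \tbar) > 0$), the variable $\rho_1$ is geometric with parameter $p$ and has all moments finite. Setting $U_j := T_{\Cyl(z,r)}(\Hc_{(j-1)\|z\|_1},\Hc_{j\|z\|_1})$, subadditivity gives $W \leq \sum_{j=1}^{\rho_1} U_j$. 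The $U_j$'s are i.i.d.\ (they live on disjoint slabs), and crucially the event $\{\rho_1 \geq j\}$ is determined by edges in $E_1,\ldots,E_{j-1}$ and thus independent of $U_j$; moreover, on $\{\rho_1 > j\}$, the remainder $\rho_1 - j$ is geometric with parameter $p$ and independent of $U_j$, since it depends only on $E_{j+1},E_{j+2},\ldots$.

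Next, I bound $\E[U_j^2]$. For $r$ large enough, the cylinder admits $N \geq 9$ edge-disjoint paths from $\Hc_{(j-1)\|z\|_1}$ to $\Hc_{j\|z\|_1}$ inside $\Cyl(z,r)$, each of length at most $C\|z\|_1$. A union bound along each path gives
\[
\Pr(U_j > t) \,\leq\, \bigl(C\|z\|_1 \Pr(\tau > t/(C\|z\|_1))\bigr)^N.
\]
The function $f(t) := \Pr(\tau>t)^4$ is non-increasing and satisfies $\int_0^\infty f(t)\,dt = \E Y_+ < \infty$, which forces $f(t) \leq C'/t$, and therefore $\Pr(\tau > t) \leq (C'/t)^{1/4}$. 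Consequently
\[
\E[U_j^2] = 2\int_0^\infty t\,\Pr(U_j > t)\,dt \,\leq\, C''\int_0^\infty s\,\Pr(\tau > s)^N\,ds,
\]
and the latter is finite once $N \geq 9$, because then $s \cdot s^{-N/4}$ is integrable at infinity.

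Finally, Cauchy--Schwarz gives $W^2 \leq \rho_1 \sum_{j=1}^{\rho_1} U_j^2$. Decomposing $\rho_1\,\mathbf{1}\{\rho_1 \geq j\} = j\,\mathbf{1}\{\rho_1 \geq j\} + (\rho_1-j)\,\mathbf{1}\{\rho_1 > j\}$ and invoking the two independence statements above yields
\[
\E\bigl[\rho_1\,U_j^2\,\mathbf{1}\{\rho_1 \geq j\}\bigr] \,\leq\, (j + 1/p)\,\Pr(\rho_1 \geq j)\,\E[U_j^2].
\]
Summing over $j \geq 1$ using $\Pr(\rho_1 \geq j) = (1-p)^{j-1}$ produces a convergent series, so $\E[W^2] < \infty$.

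The main obstacle is the second moment of $U_j$: the standard four-edge-disjoint-path argument of Cox--Durrett would require $\E Y_+^2 < \infty$, which we do not assume. What saves us is (i) the monotonicity of $f = \Pr(\tau>\cdot)^4$ together with $f \in L^1$, which forces the pointwise decay $f(t) \leq C/t$; and (ii) the freedom to choose $r$ large enough that at least $N \geq 9$ edge-disjoint paths fit across each slab, making the integral $\int s\,\Pr(\tau>s)^N\,ds$ converge. This is precisely why the proposition requires $r$ large.
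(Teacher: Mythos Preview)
Your core moment estimate (using $N\ge 9$ edge-disjoint paths together with the observation that a non-increasing $L^1$ function satisfies $f(t)\le C/t$) is correct and is exactly the mechanism the paper has in mind. However, the decomposition you build the argument on has a genuine gap.

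The inequality $W\le\sum_{j=1}^{\rho_1}U_j$ is \emph{false} for the hyperplane-to-hyperplane times $U_j=T_{\Cyl(z,r)}(\Hc_{(j-1)\|z\|_1},\Hc_{j\|z\|_1})$. Subadditivity of passage times holds for points, $T(x,z)\le T(x,y)+T(y,z)$, but for set-to-set times the optimal path realising $U_j$ may terminate on $\Hc_{j\|z\|_1}$ far from where the optimal path for $U_{j+1}$ begins, so the pieces need not concatenate into a path from $\Hc_0$ to $\Hc_{\rho_1\|z\|_1}$. (If anything, the opposite inequality is the natural one, since any admissible path for $W$ must cross every intermediate hyperplane.) For the same reason, the $U_j$'s as you define them are not functions of disjoint slabs --- the cylinder passage time between two hyperplanes may use edges anywhere in $\Cyl(z,r)$ --- so the i.i.d.\ claim also fails.

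The repair is minor: replace $U_j$ by the point-to-point slab-restricted time $\tilde U_j:=T_{\text{slab}_j}((j-1)z,jz)$. Then $W\le T_{\Cyl}(0,\rho_1 z)\le\sum_{j=1}^{\rho_1}\tilde U_j$ because the $\tilde U_j$-optimal paths concatenate at the lattice points $jz$; the slabs share no edges (the diagonals $\Hc_n$ carry no lattice edges), so the $\tilde U_j$ are genuinely i.i.d.; and your $N$ edge-disjoint paths can be taken from $(j-1)z$ to $jz$ inside slab $j$, preserving the bound $\E[\tilde U_j^2]<\infty$. The independence of $\{\rho_1\ge j\}$ from $\tilde U_j$ and the Cauchy--Schwarz/random-sum estimate then go through as you wrote them.

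The paper's own route is somewhat more direct: rather than slicing into one-step slabs and summing, it bounds $W$ by the minimum of $N$ edge-disjoint path weights running the full length $(\rho_1-\rho_0)\|z\|_1$ across the cylinder in one stroke. Both arguments hinge on the same point --- that taking $r$ large allows enough disjoint paths to upgrade $\E Y_+<\infty$ to a second-moment bound --- but the paper avoids the random-sum bookkeeping at the cost of handling a path of random length.
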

\noindent
The proof is straightforward and involves bounding $T_{\Cyl(z,r)}$ by the minimum of passage times of a large number of disjoint paths of length $(\rho_1-\rho_0)\|z\|_1$.

\medskip
\noindent
{\bf Step 3.} \emph{Large deviations for cylinders.} Introduce the stopping time $\nu(m)=\nu(m,z,r):=\min\{j\ge1: \rho_j(z,r)>m\}$ and the notation
\bea
\mu_\tau(z,r):=\E[T_{\Cyl(z,r)}(\Hc_{\rho_0\|z\|_1},\Hc_{\rho_1\|z\|_1})]\quad\text{and}\quad\mu_\rho(z,r):=\E[\rho_1-\rho_0]\ .
\eea
Note that $\nu(m)-1$ equals the number of $n\in\{1,2,\ldots,m\}$ for which $A_n(z,r)$ occurs, and is binomially distributed with success probability $\Pr(A_0(z,r))=\mu_\rho(z,r)^{-1}$. It is straightforward to show that $\mu_{\Cyl(z,r)}$ is close to $\mu_{\tau}(z,r)/\mu_{\rho}(z,r)$.
The main step in the proof of Proposition~\ref{prop: homogeneousLDE} is:
\begin{prop}\label{prop:tube}
Assume that $\E Y_+<\infty$. For every $\eps>0$, $q\ge1$ and $z\in\N^2$, there exists $R_1=R_1(q)$ and $M_1=M_1(\eps,q,z)$ such that for every $r\ge R_1$, $n\in\N$ and $x\ge n$
 $$
 \Pr\big(T_{\Cyl(z,r)}(\Hc_0,\Hc_{n\|z\|_1})-n\mu_{\Cyl(z,r)}>\eps x\|z\|_1\big)\,\le\,\frac{M_1}{x^q}\ .
 $$
\end{prop}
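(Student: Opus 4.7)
The plan is to exploit the renewal decomposition from Step 2. By subadditivity, using that the cross-sections $E_{\rho_j}(z,r)$ consist entirely of edges of weight at most $\bar t$, one obtains the almost-sure bound
$$T_{\Cyl(z,r)}(\Hc_0, \Hc_{n\|z\|_1}) \,\le\, \sum_{j=1}^{\nu(n)} T_j + \text{boundary},$$
where the $T_j := T_{\Cyl(z,r)}(\Hc_{\rho_{j-1}\|z\|_1}, \Hc_{\rho_j\|z\|_1})$ are i.i.d., $\nu(n)-1$ is $\text{Binomial}(n,1/\mu_\rho)$, and the boundary term is deterministically bounded by a constant multiple of $r\bar t$. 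Wald's identity applied to \eqref{eq:fekete} shows $\mu_{\Cyl(z,r)} = \mu_\tau(z,r)/\mu_\rho(z,r)$, so proving the proposition reduces to controlling two independent sources of fluctuation: that of $\nu(n)$ around $n/\mu_\rho$, and that of $\sum_{j=1}^N T_j$ around $N\mu_\tau$ for deterministic $N$ close to $n/\mu_\rho$.

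The crucial technical input is a strengthening of Proposition~\ref{prop:regincrement}: for every $p \ge 1$ there exists $R = R(p)$ such that $\E T_1^p < \infty$ whenever $r \ge R$. To establish this I would construct $\Theta(r)$ edge-disjoint paths crossing the slab between $\Hc_{\rho_0\|z\|_1}$ and $\Hc_{\rho_1\|z\|_1}$, bounding $T_1$ by the minimum of $\Theta(r)$ sums of $F_+$-distributed edge-weights of length $(\rho_1-\rho_0)\|z\|_1$. Since the tail of the minimum of $k$ i.i.d.\ path-sums decays like the $k$-th power of the single-path tail, a Cox--Durrett-style computation combined with the geometric tails of $\rho_1-\rho_0$ (parameter $1/\mu_\rho(z,r)$) and $\E Y_+ < \infty$ yields polynomial moments of arbitrary order once $r$ is large enough, as in \cite[Lemma~3.1]{coxdur81}.

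With this moment control, for $r \ge R(2q)$, Rosenthal's inequality for sums of i.i.d.\ centered variables with finite $2q$-th moment yields
$$\Pr\Bigl(\Bigl|\sum_{j=1}^{N}(T_j - \mu_\tau)\Bigr| > \delta N\Bigr) \,\le\, \frac{C_q}{\delta^{2q} N^q}.$$
Meanwhile, Hoeffding's inequality gives exponential concentration $\Pr(|\nu(n) - n/\mu_\rho| > \delta n) \le e^{-c\delta^2 n}$, which trivially dominates any polynomial rate $1/x^q$ for $x \ge n$. A union bound over these two events, together with the estimate $|n\mu_{\Cyl(z,r)} - N\mu_\tau| \le C(\delta n + 1)$ whenever $|N - n/\mu_\rho| \le \delta n$, yields the claimed decay $M_1/x^q$ uniformly in $x \ge n$. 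Setting $R_1(q) := R(2q)$ completes the argument.

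The main obstacle is the improved moment bound on $T_1$ in thick cylinders: one must simultaneously control the random length $\rho_1 - \rho_0$ of the regeneration block and the minimum-of-parallel-paths quantity across a block of that length, while keeping the path construction compatible with whatever configuration of low-weight cross-sections defines the event $A_0(z,r)$. A conditional argument, conditioning on $\rho_1-\rho_0 = k$ and exploiting the independence of edge-weights on disjoint subsets of $\Cyl(z,r)$, decouples these two contributions and delivers a threshold $R$ that depends purely on $p$, as demanded by the statement.
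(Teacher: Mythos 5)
Your skeleton matches the paper's Step 2--3 framework (regeneration cross-sections, i.i.d.\ increments $T_j$, binomial $\nu(n)$, stopped-sum comparison), but your mechanism for reaching the rate $x^{-q}$ is genuinely different: the paper keeps only \emph{second} moments of the increments (Proposition~\ref{prop:regincrement}), gets the $q=1$ bound $M/x$ by Chebychev plus Wald, and then boosts to general $q$ by taking the minimum of passage times over about $q$ disjoint parallel sub-cylinders of radius $r$ inside a cylinder of radius $R_1(q)$ -- that is precisely why $R_1$ depends on $q$. You instead propose arbitrary polynomial moments of $T_1$ for $r\ge R(p)$ and then Rosenthal. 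That alternative is plausible: since $\E Y_+<\infty$ gives $\Pr(\tau_e>s)^4\le C/s$, a minimum over $\Theta(r)$ disjoint crossings does yield $p$-th moments once $r$ is large depending on $p$, provided you decouple the random block length from the crossing time (your ``conditional argument'' needs care, e.g.\ a Cauchy--Schwarz/H\"older step, because the crossings must use edges of the interior cross-sections on which the events $A_m^c$ are conditioned, and dropping those indicators destroys the geometric decay of $\rho_1-\rho_0$).

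The genuine gap is in your centering. Wald's identity does \emph{not} give $\mu_{\Cyl(z,r)}=\mu_\tau(z,r)/\mu_\rho(z,r)$: gluing near-optimal block paths at a regeneration cross-section costs up to $|E_{\rho_j}(z,r)|\,\tbar=O(r\tbar)$ \emph{per regeneration}, so one only gets $\mu_\tau/\mu_\rho\le\mu_{\Cyl(z,r)}\le(\mu_\tau+Cr\tbar)/\mu_\rho$, and correspondingly your ``boundary'' term is not a single $O(r\tbar)$ but $O(\nu(n)\,r\tbar)$. This leaves a systematic offset of order $n\,r\tbar\,\Pr(A_0(z,r))$ between the mean of your renewal upper bound and the true centering $n\mu_{\Cyl(z,r)}$, and for fixed $r$ this need not be below $\eps n\|z\|_1$; forcing it to be small makes $r$ depend on $\eps$ (and $z$), contradicting the quantifier $R_1=R_1(q)$ in the statement. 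The paper resolves exactly this by the coarse-graining you skipped: it runs the regeneration scheme for $y=mz$ with $m=m(\eps,z,r)$ chosen via Fekete's lemma so that $\tfrac1m\E T_{\Cyl(z,r)}(\Hc_0,\Hc_{m\|z\|_1})$ is within order $\eps$ of $\mu_{\Cyl(z,r)}$ and the per-regeneration gluing cost is negligible per unit length; the $\eps$-dependence is then absorbed into $M_1$, not into $R_1$. Either add such a coarse-graining step (after which your Rosenthal route for the fluctuation term is a legitimate substitute for the paper's disjoint-cylinder boosting), or note explicitly that you are proving a weaker statement with $R_1=R_1(\eps,q,z)$; as written, the reduction ``to two independent sources of fluctuation'' mis-centers the sum by a term of order $n$ that your argument does not control.
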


\begin{proof}[Outline of proof]
The main tools are Chebychev's inequality and Wald's equation. Fix $\eps>0$ and choose $r$ large enough that $T_{\Cyl(z,r)}(\Hc_0,\Hc_{\|z\|_1})$ has finite variance (see Proposition~\ref{prop:regincrement}). Next choose $m$ large for $\frac{1}{m}\E\big[T_{\Cyl(z,r)}(\Hc_0,\Hc_{m\|z\|_1})\big]$ to be close to $\mu_{\Cyl(z,r)}$ (see~\eqref{eq:fekete}). Set $y=mz$.

$\big(T_{\Cyl(z,r)}(\Hc_0,\Hc_{\rho_j\|y\|_1})\big)_{j\ge1}$ is not a renewal sequence, but by the choice of cross-sections defining the $\rho_i$'s, it follows that $T_{\Cyl(z,r)}(\Hc_0,\Hc_{\rho_j\|y\|_1})$ is well-approximated by the sum of $T_{\Cyl(z,r)}(\Hc_{\rho_{i-1}\|y\|_1},\Hc_{\rho_i\|y\|_1})$ for $i=1,2,\ldots,j$ (where $\rho_0=0$). Note that these terms are i.i.d.\ by construction, and have finite variance. We will therefore use optimal stopping to approximate $T_{\Cyl(z,r)}(\Hc_0,\Hc_{n\|z\|_1})$ by a stopped sum. In particular,
\bea
\begin{aligned}
\Pr\Big(T_{\Cyl}(\Hc_0,\Hc_{\nu(n)\|y\|_1})-n\mu_+(y)>3x\Big)\;&\le\; \Pr\bigg(\sum_{i=1}^{\nu(n)}\Big(T_{\Cyl}(\Hc_{\rho_{i-1}\|y\|_1},\Hc_{\rho_i\|y\|_1})-\mu_\tau(y,r)\Big)>x\bigg)\\
&+\Pr\Big(\nu(n)\mu_\tau(y,r)-n\mu_+(y)>x\Big)+\,\text{small error}\ .
\end{aligned}
\eea

We apply Chebychev's inequality to the second term in the right side for the upper bound $M/x$. For the first term, we use both Chebychev's inequality and Wald's lemma for a similar bound $M/x$, and this implies the proposition for $q=1$. For larger $q$, one strengthens the bound by considering disjoint cylinders of radius $r$ aligned next to each other.
\end{proof}

\medskip
\noindent
{\bf Step 4.} \emph{Large deviations for the half-plane.} To derive the estimate for the half-plane from Proposition~\ref{prop:tube} we need to compare travel times on a cylinder with those in the half-plane. To circumvent the fact that the finite chunk $C_r$ of $\Cyl(z,r)$ in between $\Hc_0$ and $\Hc_{n\|z\|_1}$ may intersect the left half-plane we will shift space slightly. Namely, shifting $C_r$ by the vector $(r+1,r+1)$ completely includes it in the first quadrant, and 
$$
\Pr\big(T_+(\tilde\Hc_{2r+2},\tilde\Hc_{n\|z\|_1+2r+2})>x\big)\le\Pr\big(T_{\Cyl(z,r)}(0,nz)>x\big)\ ,
$$
where $\tilde\Hc_{n+2r+2}=\Hc_{n+2r+2}\cap[n,\infty)^2$. We now combine this with~\eqref{eq:muKconv} and Proposition~\ref{prop:tube}: for every $\eps>0$, $q\ge1$ and $z\in\Hb_+$, there exists $M_2=M_2(\eps,q,z)$ such that for every $n\in\N$ and $x\ge n$
\be\label{eq:half-plane1}
\Pr\big(T_+(2\ebf_1,2\ebf_1+nz)-n\mu_+(z)>2\eps x\|z\|_1\big)\,\le\,M_2\,\Pr(Y>x/M_2)+\frac{M_2}{x^q}\ .
\ee
The first term on the right is a bound on a sum of terms of the form $T_+(x,y)$, where $x$ and $y$ are at bounded distance. It is an error from `surgery' needed to connect $2\mathbf{e}_1$ to $\tilde\Hc_{2r+2}$, and $\tilde\Hc_{n\|z\|_1+2r+2}$ to $2\mathbf{e}_1+nz$. \eqref{eq:half-plane1} is close to the statement we aim to prove, and it remains only to remove the dependence of $M_2$ on the direction $z$. The argument is similar to the step from radial convergence of passage times to the shape theorem (as in the proof of \eqref{eq: inverted_shape}). One first obtains a control on large passage times of the type
\[
\Pr\big(T_+(2\ebf_2,2\ebf_2+z)>M_3x\big)\,\le\, M_3\,\Pr(Y>x/M_3)+\frac{M_3}{x^q}
\]
for all $z\in \Hb_+$ and $x\geq \|z\|_1$ and then argues as in \eqref{eq: inverted_shape}.

\bigskip
\noindent
{\bf Acknowledgements.} The authors would like to thank MSRI and U. C. Berkeley for its hospitality during a stay related to this work. 
The research of D.A. was supported by a post-doctoral scholarship from CNPq.
The research of M.D. is supported by NSF grant DMS-0901534.
The research of V.S. was supported in part by CNPq grants 308787/2011-0 and  476756/2012-0 and FAPERJ grant E-26/102.878/2012-BBP.
This work was also supported by ESF RGLIS Excellence Network.

\bibliographystyle{alpha}
\bibliography{bibpercolation}

\end{document}